\newtheorem{theorem}{Theorem}[section]
\newtheorem{corollary}[theorem]{Corollary}
\theoremstyle{remark}
\newtheorem{remark}[theorem]{Remark}
\theoremstyle{definition}
\newtheorem{assumption}[theorem]{Assumption}
\newtheorem{definition}[theorem]{Definition}
\newcommand\cbrk{\text{$]$\kern-.15em$]$}}
\newcommand\opar{\text{\,\raise.2ex\hbox{${\scriptstyle|}$}\kern-.34em$($}}
\newcommand\cpar{\text{$)$\kern-.34em\raise.2ex\hbox{${\scriptstyle |}$}}\,}
\def\qed{{\hfill $\Box$ \bigskip}}
\def\XXint#1#2#3{{\setbox0=\hbox{$#1{#2#3}{\int}$}
\vcenter{\hbox{$#2#3$}}\kern-.5\wd0}}
\newcommand\bZ{\mathbb{Z}}
\newcommand\bE{\mathbb{E}}
\newcommand\bR{\mathbb{R}}
\newcommand\bN{\mathbb{N}}
\newcommand\bP{\mathbb{P}}
\newcommand\fR{\mathbf{R}}
\newcommand\cF{\mathcal{F}}
\newcommand\cS{\mathcal{S}}
\newcommand\cM{\mathcal{M}}
\newcommand\cO{\mathcal{O}}
\newcommand\aint{-\hspace{-0.38cm}\int}
\newcommand{\mysection}[1]{\section{#1}
\setcounter{equation}{0}}
\begin{document}

\title[Degenerate PDEs with lower order terms]
{A weighted $L_q(L_p)$-theory for fully degenerate second-order evolution equations with  unbounded time-measurable coefficients}

\author{Ildoo Kim}
\address{Department of mathematics, Korea university, 1 anam-dong
sungbuk-gu, Seoul, south Korea 136-701}
\email{waldoo@korea.ac.kr}
\thanks{I. Kim has been supported by the National Research Foundation of Korea(NRF) grant funded by the Korea government(MSIT) (No.NRF-2020R1A2C1A01003959)}

\subjclass[2010]{35K65, 35B65, 35K15}

\keywords{Degenerate second-order parabolic equations, Weighted $L_p$-estimates, zero initial-value problem}

\begin{abstract}
We study the   fully degenerate second-order evolution  equation 
\begin{align}
						\label{abs eqn}
u_t=a^{ij}(t)u_{x^ix^j} +b^i(t) u_{x^i} + c(t)u+f, \quad t>0, x\in \bR^d  
\end{align}
given with the zero initial data. Here $a^{ij}(t)$, $b^i(t)$, $c(t)$ are merely locally integrable functions, and $(a^{ij}(t))_{d \times d}$ is a nonnegative symmetric matrix with the smallest eigenvalue $\delta(t)\geq 0$. 
We show that there is a positive constant $N$ such that
\begin{align}
							\notag
& \int_0^{T} \left(\int_{\bR^d} \left(|u|+|u_{xx} |\right)^{p}  dx \right)^{q/p} 
e^{-q\int_0^t c(s)ds} w(\alpha(t))   \delta(t) dt \\
						\label{abs weight}
&\leq N \int_0^{T} \left(\int_{\bR^d} \left|f\left(t,x\right)\right|^{p}  dx \right)^{q/p} e^{-q\int_0^t c(s)ds} w(\alpha(t)) (\delta(t))^{1-q}  dt,
\end{align}
where $p,q \in (1,\infty)$, $\alpha(t)=\int_0^t \delta(s)ds$, and  $w$ is a Muckenhoupt's weight.
\end{abstract}

\maketitle

\mysection{introduction}
Needless to say, the second-order partial differential equations equations with degenerate or unbounded coefficients have been extensively studied for a long time.
To the best of our knowledge, the starting point of this study was 
Keldysh, Fichera, and Oleĭnik's work (see e.g. \cite{keldysh1951some,fichera1963unified,oleinik1965smoothness,oleinik1966alcuni,oleinik2012second}). 
Moreover, it is very popular to study a (maximal regularity) $L_p$-theory and its generalization to $L_q(L_p)$-theory in harmonic analysis, Fourier analysis, and partial differential equations after Calder\'on and Zygmund's work. 
For the historical works and backgrounds of $L_p$-theories and their generalizations, we refer some outstanding books \cite{Krylov2008,ladyvzenskaja1988linear,Stein1993,grafakos2014classical,grafakos2014modern,hytonen2016analysis,hytonen2018analysis}.
These days, there are tons of papers handling degenerate and unbounded coefficients in various prospectives.
Among recent works with various prospectives, we only refer the author to \cite{kim2007sobolev,fornaro2012degenerate,du2013wm,mamedov2013first,fornaro2015second,gerencser2015solvability,leahy2015degenerate,gerencser2016stochastic,gadjiev2017priori,kim2017heat,li2017weighted,pruess2017second,cao2018weighted,monticelli2019poincare,amann2020linear,dong2020parabolic,gadjiev2020solvability,wu2021lp,dong2021parabolic,dong2021regularity,fornaro2022multi,kozhanovinverse,schochet2022sobolev,zulfaliyeva2022smoothness}. These results handle equations having degenerate or unbounded coefficients in Sobolev spaces.

%
%Among these studies, we want to mention some recent results to treat solvability of solutions to second-order (stochastic) partial differential equations in Sobolev spaces, e.g. 

With the degeneracy in the equation, it is hard to expect to obtain full regularity estimates of solutions unless  there are weights involved  in estimates. For instance, by taking the leading coefficients $a^{ij}(t)=0$ for all $i,j,t$, we see that it is not possible to obtain the unweighted 
maximal $L_p$-regularity 
\begin{align}
							\label{cal-zyg est}
\int_0^T \int_{\bR^d} |u_{xx}(t,x)|^p dt dx \leq N \int_0^T \int_{\bR^d} |f(t,x)|^p dt dx.
\end{align}
%for all $f \in L_p( (0,T) \times \bR^d)$ and the corresponding solution $u$ to \eqref{abs eqn}.
%Nonetheless, as shown in \eqref{abs weight}, we can still characterize a regularity relation between the free term and the solution with the help of appropriate weights. 
Hence,  weights have been commonly used to  controls the degeneracy or unboundeness (singularity) of the coefficients. 
However, most results in the literature focus on degeneracy or singularity near the boundary of a domain.
If we consider the whole space, it is naturally not expected that there is a regularity gain of a solution in general due to the extreme case such as
$u_t=f$, which could be understood as one of equation \eqref{abs eqn} with coefficients $a^{ij}(t)=0$ for all $t$.
Hence when it comes to the solvability of second-order equations with degeneracy in the whole space, people used to only prove the existence and uniqueness of a weak solution without considering regularity gain from the equations.

Nonetheless, there is a way to express  an $L_p$-norm of second derivatives of a solution $u$ with a weight which could be singular even in the whole space. 
For instance, assume that the degeneracy happens on a  time interval $(a,b)$, then $\delta(t)=0$ for all $t \in (a,b)$.
Then we cannot expect the smoothing gain from the diffusion equations and  the Sobolev second derivatives $u_{xx}$ fails to exist.
However, since there is the weight $\delta(t)$  in the first line of \eqref{abs weight}, the inequality is still true if we understand the second line of \eqref{abs weight} as an improper integral. To the best of our knowledge, this type estimate is firstly introduced by the author and collaborator in \cite{kim2018second,kim2019sharp}. In this paper, we add Muckenhoupt's weights in estimates and extend $L_p$-estimates to $L_q(L_p)$-estimates with lower-order terms.

It is well-known that probabilistic methods are very powerfully working for leading coefficients which are unbounded and have degeneracy (cf. \cite{krylov1995introduction,cerrai2001}).
We remark that probabilistic tools play very important roles to obtain our results.
Especially, to obtain \eqref{abs weight}, it requires to understand the relation among the constant $N$, the degeneracy, and the unboundedness of coefficients $a^{ij}(t)$. 
Maximal $L_p$-regularity estimates such as \eqref{cal-zyg est} originally came from $L_p$-boundedness of singular integral operators.
However, the exact relation among parameters related to coefficients is hard to obtain from singular integral theories since all parameters are combined in a complicated way to control  singularities of operators. 
We found that this relation could be more clear by applying probabilistic representations of solutions (see Theorem \ref{enhance est thm}).

We believe that our result could initiate various interesting weighted estimates for degenerate second-order equations with space dependent coefficients 
or domain problems.

This paper is organized as follows. In Section \ref{section main}, we introduce  our main results.
A probabilistic solution representation and its application to estimate a solution $u$ with general weights are given in Section \ref{section representation}
Weighted estimates for non-degenerate equations  are shown in Section \ref{section non}.
Finally, the proof of the main theorem is specified in Section \ref{pf main thm}.

We finish the introduction with  notation used in the article. 
\begin{itemize}

\item We use Einstein's summation convention throughout this paper. 

\item $\bN$ and $\bZ$ denote the natural number system and the integer number system, respectively.
As usual $\fR^{d}$
stands for the Euclidean space of points 
$$
x=
\begin{pmatrix}
x^1\\
x^2\\
\vdots \\
x^d
\end{pmatrix}.
$$
Frequently, the coordinates of the vector $x$ is denoted in a row form, i.e. $x=(x^1,\ldots,x^d)$. 
We use the notation $(a^{ij})_{d \times d}$ to denote the $d$ by $d$ matrix whose entry in $i$-th row and $j$-th column is $a^{ij}$.
 For $i=1,...,d$, multi-indices $\alpha=(\alpha_{1},...,\alpha_{d})$,
$\alpha_{i}\in\{0,1,2,...\}$, and functions $u(x)$ we set
$$
u_{x^{i}}=\frac{\partial u}{\partial x^{i}}=D_{i}u,\quad
D^{\alpha}u=D_{1}^{\alpha_{1}}\cdot...\cdot D^{\alpha_{d}}_{d}u.
$$
%We also use the notation $D^m$ for a partial derivative of order $m$ with respect to $x$.  
%Sometimes the notation $D^m_x$ is used to denote the variable instead of $D^m$. 
%

\item $C^\infty(\bR^d)$ denotes the space of infinitely differentiable functions on $\bR^d$. 
$\cS(\bR^d)$ is the Schwartz space consisting of infinitely differentiable and rapidly decreasing functions on $\bR^d$.
By $C_c^\infty(\bR^d)$,  we denote the subspace of $C^\infty(\bR^d)$ with the compact support.

\item  For $n\in \bN$ and $\cO\subset \bR^d$ and a normed space $F$, 
by $C(\cO;F)$,  we denote the space of all $F$-valued continuous functions $u$ on $\cO$ having $|u|_{C}:=\sup_{x\in O}|u(x)|_F<\infty$.
%If $F=\fR$, then we simply put $C^n(\cO)=C^n(\cO;\fR)$.

\item For $p \in [1,\infty)$, a normed space $F$,
%by $L_{p}(\mathcal{O};F)$ we denote the set
%of $F$-valued Lebesgue measurable function $u$ on $\mathcal{O}$ satisfying
%\[
%\left\Vert u\right\Vert _{L_{p}(\mathcal{O};F)}:=\left(\int_{\mathcal{O}}\|u(x)\|_{F}^{p}dx\right)^{1/p}<\infty.
%\]
%We write $L_{p}(\mathcal{O})=L_{p}(\mathcal{O};\fR)$ and $L_{p}=L_{p}(\fR^{d})$.
and a  measure space $(X,\mathcal{M},\mu)$, 
by $L_{p}(X,\cM,\mu;F)$,
we denote the space of all $F$-valued $\mathcal{M}^{\mu}$-measurable functions
$u$ so that
\[
\left\Vert u\right\Vert _{L_{p}(X,\cM,\mu;F)}:=\left(\int_{X}\left\Vert u(x)\right\Vert _{F}^{p}\mu(dx)\right)^{1/p}<\infty,
\]
where $\mathcal{M}^{\mu}$ denotes the completion of $\cM$ with respect to the measure $\mu$. 
%For $p=\infty$, we write $u \in L_{\infty}(X,\cM,\mu;F)$ if
%$$
% \|u\|_{L_{\infty}(X,\cM,\mu;F)} 
%:= \inf\left\{ \nu \geq 0 : \mu( \{ x: \|u(x)\|_F > \nu\})=0\right\} <\infty.
%$$
%where $\sup$ denotes the essential supremum with respect to measure $\mu$.
If there is no confusion for the given measure and $\sigma$-algebra, we usually omit them.
%In particular, we set $L_p=L_p(\bR^d)$, which denotes the $L_p$-space on $\bR^d$ with the Lebesuge measurable sets and Lebesgue measure. 
%\item
%For  functions $f(x)$ and $g(x)$, we use the notation $f=O(g)$ as $x \to x_0$ iff
%there exist positive constant $\delta$ and $N$ such that
%$$
%|f(x)| \leq N|g(x)|
%$$
%for all $|x-x_0| < \delta$.

\item For  measurable set  $\cO \subset \bR^d$, $|\cO|$ denotes the Lebesgue measure of $\cO$.

%\item For a $d \times d$ matrix $A$, we use the notation $A=(a^{ij})$ ($i,j=1,\ldots,d)$ to emphasize the components of the matrix $A$. Here $a^{ij}$ denotes the value at $i$-th row and $j$-th column. 

\item By $\cF$ and $\cF^{-1}$ we denote the d-dimensional Fourier transform and the inverse Fourier transform, respectively. That is,
$\cF[f](\xi) := \int_{\fR^{d}} e^{-i x \cdot \xi} f(x) dx$ and $\cF^{-1}[f](x) := \frac{1}{(2\pi)^d}\int_{\fR^{d}} e^{ i\xi \cdot x} f(\xi) d\xi$.

\item 
We write $a \lesssim b$ if there is a positive constant $N$ such that $ a\leq N b$.
The constant $N$ may change from a location to a location, even within a line. 
If we write $N=N(a,b,\cdots)$, this means that the
constant $N$ depends only on $a,b,\cdots$. 
The dependence of the constant $N$ is usually specified in the statements of theorems, lemmas, and corollaries.

\end{itemize}

%\mysection{main result}

%\vspace{5mm}

%We use the Banach spaces
%introduced in \cite{KK2} and \cite{Lo2}.

\mysection{Setting and main result}
									\label{section main}
	
Throughout the paper, we fix $d \in \bN$ to denote the dimension of the space variable and all functions are real-valued if there is no special comment. 
We study the following degenerate second-order evolution equation
\begin{align}
				\notag
&u_t(t,x)=a^{ij}(t)u_{x^ix^j}(t,x) +b^i(t) u_{x^i}(t,x) + c(t)u(t,x)+f(t,x),   \\
&u(0,x)=0, 
 \qquad \qquad \qquad \qquad \qquad \qquad \qquad \qquad (t,x) \in (0,T) \times \bR^d.
				\label{main eqn}
\end{align}
We  emphasize that our coefficients $a^{ij}(t)$, $b^i(t)$, and $c(t)$ do not satisfy any regularity conditions.
More importantly, our coefficients $a^{ij}(t)$, $b^i(t)$, and $c(t)$ can be unbounded and degenerate.
Here are more concrete conditions on the coefficients $a^{ij}(t)$, $b^i(t)$, and $c(t)$.

\begin{assumption}
						\label{main as}
\begin{enumerate}[(i)]
\item Assume that  there exists a measurable mapping $\delta(t)$ from $(0,\infty)$ to $[0,\infty)$ such that
\begin{align*}
a^{ij}(t) \xi^i \xi^j \geq \delta(t) |\xi|^2 \quad \forall t \in [0,\infty)~ \text{and}~ \xi \in \bR^d. 
\end{align*}
\item Assume that the coefficients $a^{ij}(t)$, $b^i(t)$, and $c(t)$ are locally integrable, i.e. 
\begin{align}
							\label{coe local inte}
\int_0^T \left(|a^{ij}(t)|+|b^i(t)|+|c(t)|\right) dt < \infty \qquad \forall T \in (0,\infty)~\text{and}~ \forall i,j.
\end{align}
\end{enumerate}
\end{assumption}

For $ T \in (0,\infty)$ and a measurable function $u$ on $(0,T) \times \bR^d $, we say that $u$ is locally integrable if
\begin{align*}
\int_0^t \int_{|x| <c} |u(t,x)| dx dt < \infty \quad \forall t \in (0,T)~\text{and}~ \forall c>0.
\end{align*}

\begin{definition}[Solution]
							\label{def sol}
Let $T \in (0,\infty)$ and $f$ be a locally integrable function on $(0,T) \times \bR^d$. 
We say that a locally integrable function $u$ is a solution to \eqref{main eqn} if for any $\varphi \in C_c^\infty(\bR^d)$,
\begin{align}
						\notag
(u(t,\cdot), \varphi) 
&= \int_0^t \left(u(s,\cdot), a^{ij}(s)\varphi_{x^ix^j}+b^i(s)\varphi_{x^i} +c(s) \varphi \right) ds \\
						\label{weak formulation}
&\quad +\int_0^t \left(f(s,\cdot), \varphi \right) ds
\quad \forall t \in (0,T),
\end{align}
where $(u(t,\cdot), \varphi)$ denotes the $L_2(\bR^d)$-inner product, i.e.
$$
(u(t,\cdot), \varphi):= \int_{\bR^d} u(t,x)\varphi(x)dx.
$$
\end{definition}
\begin{remark}
Due to the definition of a solution, it is obvious that
$$
a^{ij}(t) u_{x^ix^j} = \frac{a^{ij}(t) + a^{ji}(t)}{2} u_{x^ix^j}.
$$
Thus without loss of generality, we may assume that our coefficient matrix $(a^{ij}(t))_{d \times d}$ is nonnegative symmetric for all $t$.
Additionally, $\delta(t)$ in Assumption \ref{main as}(i) can be chosen by the smallest eigenvalue of $(a^{ij}(t))_{d \times d}$.
\end{remark}

We recall the definition of Muckenhoupt's weights.
\begin{definition}[Muckenhoupt's weight]
							\label{def weight}
For $q\in(1,\infty)$, let $A_q(\bR)$ be the class of all nonnegative and locally integrable functions $w$ on $\bR$ satisfying
$$
[w]_{A_q(\bR)}:=\sup_{ -\infty<a<b <\infty}\left(\aint_{(a,b)}w(t)dt\right)\left(\aint_{(a,b)}w(t)^{-1/(q-1)}dt\right)^{q-1}<\infty,
$$
where 
$$
\aint_{(a,b)}w(t)dt = \frac{\int_a^b w(t)dt}{b-a}.
$$
\end{definition}

Finally, we introduce our main result.
\begin{theorem}
						\label{main thm}
Let $T \in (0,\infty)$, $p,q \in (1,\infty)$, and  $w \in A_q (\bR)$.
Suppose that Assumption \ref{main as} holds. 
Then for any  locally integrable function $f$ on $(0,T) \times \bR^d$, there is a unique solution $u$ to equation \eqref{main eqn} such that
\begin{align}
						\notag
&  \sup_{t \in [0,T]} \left[ \left(\int_{\bR^d} \left| u\left(t,x\right)(t,x)\right|^{p}  dx \right)^{q/p} e^{-q\int_0^t c(s)ds} \right] \\
						\label{main est -1}
&\leq \left[\int_0^{\alpha(T)} w(t)^{-\frac{1}{q-1}} dt\right]^{q-1} \int_0^{T} \left(\int_{\bR^d} \left|f\left(t,x\right)\right|^{p}  dx \right)^{q/p}e^{-q\int_0^t c(s)ds}  w(\alpha(t)) |\delta(t)|^{1-q}  dt,
\end{align}

\begin{align}
						\notag
& \int_0^{T} \left(\int_{\bR^d} \left| u\left(t,x\right)(t,x)\right|^{p}  dx \right)^{q/p} 
e^{-q\int_0^t c(s)ds} w(\alpha(t))   \delta(t) dt \\
						\label{main est 0}
&\leq  [w]_{A_q(\bR)} [\alpha(T)]^q  \int_0^{T} \left(\int_{\bR^d} \left|f\left(t,x\right)\right|^{p}  dx \right)^{q/p} e^{-q\int_0^t c(s)ds} w(\alpha(t)) |\delta(t)|^{1-q}  dt,
\end{align}
and
\begin{align}
						\notag
& \int_0^{T} \left(\int_{\bR^d} \left| u_{xx}\left(t,x\right)(t,x)\right|^{p}   dx \right)^{q/p} 
e^{-q\int_0^t c(s)ds} w(\alpha(t))   \delta(t) dt \\
						\label{main est}
&\leq N \int_0^{T} \left(\int_{\bR^d} \left|f\left(t,x\right)\right|^{p}  dx \right)^{q/p}e^{-q\int_0^t c(s)ds} w(\alpha(t)) |\delta(t)|^{1-q}  dt, 
\end{align}
where $\alpha(t) = \int_0^t \delta(s)ds$ and  $N$ is a positive constant depending only on $d$, $p$, $q$, and $[w]_{A_q(\bR)}$.
In particular, for any  $-1 < \beta < q-1$,
\begin{align}
						\notag
&  \sup_{t \in [0,T]} \left[ \left(\int_{\bR^d} \left| u\left(t,x\right)(t,x)\right|^{p}  dx \right)^{q/p} e^{-q\int_0^t c(s)ds} \right] \\
						\notag
&\leq \left[ \frac{q-1}{q-1-\beta}\right]^{q-1} \left[\int_0^T \delta(t)dt\right]^{q-1-\beta}  \\
						\label{main particular est -1}
&\quad \times \int_0^{T}\left(\int_{\bR^d} \left|f\left(t,x\right)\right|^{p}  dx \right)^{q/p} e^{-q\int_0^t c(s)ds} \left| \int_0^t \delta(s)ds \right|^{\beta}  (\delta(t))^{1-q}  dt,
\end{align}
\begin{align}
						\notag
& \int_0^{T} \left(\int_{\bR^d} \left| u\left(t,x\right)(t,x)\right|^{p}  \right)^{q/p} 
e^{-q\int_0^t c(s)ds} \left| \int_0^t \delta(s)ds \right|^{\beta}   \delta(t) dt \\
							\notag
&\leq   [|t|^\beta]_{A_p(\bR)}\left[ \int_0^T \delta(t)dt \right]^q \\
						\label{main particular est 0}
&\quad \times \int_0^{T} \left(\int_{\bR^d} \left|f\left(t,x\right)\right|^{p}  dx \right)^{q/p} e^{-q\int_0^t c(s)ds} \left| \int_0^t \delta(s)ds \right|^{\beta}  (\delta(t))^{1-q}  dt,
\end{align}
and
\begin{align}
						\notag
& \int_0^{T} \left(\int_{\bR^d} \left| u_{xx}\left(t,x\right)(t,x)\right|^{p}  \right)^{q/p} 
e^{-q\int_0^t c(s)ds} \left| \int_0^t \delta(s)ds \right|^{\beta}   \delta(t) dt \\
						\label{main particular est}
&\leq   N\int_0^{T} \left(\int_{\bR^d} \left|f\left(t,x\right)\right|^{p}  dx \right)^{q/p} e^{-q\int_0^t c(s)ds} \left| \int_0^t \delta(s)ds \right|^{\beta}  (\delta(t))^{1-q}  dt,
\end{align}
where $N$ depends only on $d$, $p$, $q$, and $\beta$.
\end{theorem}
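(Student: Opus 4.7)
The plan is to reduce \eqref{main eqn} to a non-degenerate evolution equation via a time change driven by $\alpha(t)=\int_0^t \delta(s)\,ds$, then apply the weighted $L_q(L_p)$-estimates for the non-degenerate case developed in Section~\ref{section non}, and finally undo the change of variables. First I would absorb the zero-order term by setting $\tilde u(t,x)=e^{-\int_0^t c(s)\,ds}u(t,x)$, which satisfies the same equation with $c\equiv 0$ and source $e^{-\int_0^t c(s)\,ds}f$. Then the drift $b^i(t)u_{x^i}$ is removed by an appropriate deterministic spatial translation of the form $\bar u(t,x)=\tilde u(t,x-\int_0^t b(s)\,ds)$, which preserves all $L_p(\bR^d)$-norms of $u$, $u_{xx}$, and $f$. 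After these two reductions the equation reads $\bar u_t = a^{ij}(t)\bar u_{x^ix^j}+\bar f$ with $\bar u(0,\cdot)=0$, and it suffices to prove the three displayed inequalities in this simpler setting.

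Next I would invoke the probabilistic solution representation from Theorem~\ref{enhance est thm}, which writes $\bar u(t,\cdot)$ as the convolution of $\bar f(s,\cdot)$ against a time-inhomogeneous Gaussian whose variance accumulated between times $s$ and $t$ is $\alpha(t)-\alpha(s)$. Under the substitution $\tau=\alpha(t)$, $\sigma=\alpha(s)$, this representation is precisely that of a non-degenerate heat-type equation on $[0,\alpha(T)]$ for $U(\tau,\cdot):=\bar u(\alpha^{-1}(\tau),\cdot)$ with source $\tilde F$ determined, via $d\tau=\delta(t)\,dt$, by $\|\tilde F(\tau,\cdot)\|_{L_p(\bR^d)}=\delta(t)^{-1}\|\bar f(t,\cdot)\|_{L_p(\bR^d)}$. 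Applying the Muckenhoupt-weighted estimate for the non-degenerate equation from Section~\ref{section non} gives
\begin{equation*}
\int_0^{\alpha(T)}\|U_{xx}(\tau,\cdot)\|_{L_p(\bR^d)}^q w(\tau)\,d\tau \le N\int_0^{\alpha(T)}\|\tilde F(\tau,\cdot)\|_{L_p(\bR^d)}^q w(\tau)\,d\tau,
\end{equation*}
with $N=N(d,p,q,[w]_{A_q(\bR)})$. Substituting back, $w(\tau)\,d\tau$ on the left becomes $w(\alpha(t))\delta(t)\,dt$, while on the right $\|\tilde F\|_{L_p}^q w(\tau)\,d\tau$ becomes $\|\bar f\|_{L_p}^q w(\alpha(t))\delta(t)^{1-q}\,dt$, yielding \eqref{main est}. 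Running the same argument with $U$ in place of $U_{xx}$, combined with an $A_q$-weighted Hardy inequality on $(0,\alpha(T))$ (which produces the prefactor $[w]_{A_q(\bR)}[\alpha(T)]^q$), yields \eqref{main est 0}.

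For the $\sup$-in-time bound \eqref{main est -1}, I would return to $U(\tau,\cdot)=\int_0^\tau P_{\tau-\sigma}\tilde F(\sigma,\cdot)\,d\sigma$, apply Minkowski's inequality in $L_p(\bR^d)$ together with the contractivity $\|P_{\tau-\sigma}\tilde F\|_{L_p}\le \|\tilde F\|_{L_p}$, and then Hölder in $\sigma$ against $w(\sigma)\,d\sigma$ with conjugate exponents $q$ and $q/(q-1)$; this produces the prefactor $\bigl[\int_0^{\alpha(T)}w(\sigma)^{-1/(q-1)}\,d\sigma\bigr]^{q-1}$ as required. The particular cases \eqref{main particular est -1}–\eqref{main particular est} are then immediate: the power $w(t)=|t|^\beta$ belongs to $A_q(\bR)$ exactly for $-1<\beta<q-1$ with a computable norm, and the identity $\int_0^{\alpha(T)}\sigma^{-\beta/(q-1)}\,d\sigma = \frac{q-1}{q-1-\beta}\alpha(T)^{(q-1-\beta)/(q-1)}$ converts the general prefactor into the displayed explicit one.

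The main obstacle is the rigorous treatment of the time change when $\delta$ vanishes on a set of positive Lebesgue measure, so that $\alpha^{-1}$ is not a genuine function. I would handle this either by an approximation argument using $\delta_\varepsilon:=\delta+\varepsilon$ (obtaining estimates with constants independent of $\varepsilon$ and passing to the limit via the uniqueness part of Definition~\ref{def sol} together with monotone convergence on the right-hand side), or directly via the probabilistic formula of Theorem~\ref{enhance est thm}, which depends on $\alpha$ only through the Gaussian variance $\alpha(t)-\alpha(s)$ and is therefore insensitive to the non-invertibility of $\alpha$. Verifying that the $L_p^x$-norms of $u$, $u_{xx}$, and $f$ transform consistently, so that the weights $w(\alpha(t))\delta(t)$ and $w(\alpha(t))\delta(t)^{1-q}$ line up with $w(\tau)\,d\tau$ after substitution, is the delicate bookkeeping that makes the proof go through.
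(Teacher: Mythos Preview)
Your proposal is correct and follows essentially the same strategy as the paper: remove $b$ and $c$ by a spatial translation and the exponential factor, reduce to the non-degenerate setting via the time change $\tau=\alpha(t)$ and apply Theorem~\ref{enhance est thm}, handle the set where $\delta$ vanishes by the $\delta+\varepsilon$ approximation, and derive \eqref{main est -1}--\eqref{main est 0} from the representation and H\"older's inequality. The only cosmetic differences are that the paper performs the reductions in the opposite order (first establishing \eqref{main est} for $b=c=0$ and afterwards reinstating the lower-order terms), and that the probabilistic representation you cite lives in Theorem~\ref{existence thm} rather than Theorem~\ref{enhance est thm}; your $\delta_\varepsilon$ idea is exactly the paper's Step~2, where the equation is rewritten with leading part $(a^{ij}+\varepsilon I)u_{x^ix^j}$ and source $f-\varepsilon\Delta u$, the extra term vanishing in the limit via the factor $(\varepsilon/(\delta(t)+\varepsilon))^q\to 0$.
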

A proof of Theorem \ref{main thm} is given in Section \ref{pf main thm}.
\vspace{2mm}

\begin{remark}
\begin{enumerate}[(i)]
\item For $t \in [0,T]$ so that $\delta(t)=0$, the existence of the Sobolev derivatives $u_{xx}(t,x)$ is not guaranteed by \eqref{main est}.
Moreover, $\delta(t)$ can be zero on a set with a positive Lebesgue measure, which is far from Muckenhoupt's weight.
\item Since $\delta(t)$ can be zero on a set with a positive measure, the integral 
$$
\int_0^{T} \left(\int_{\bR^d} \left|f\left(t,x\right)\right|^{p}  dx \right)^{q/p} e^{-q\int_0^t c(s)ds} w(\alpha(t)) (\delta(t))^{1-q}  dt
$$
is understood in an improper sense, i.e.
\begin{align}
						\notag
&\int_0^{T} \left(\int_{\bR^d} \left|f\left(t,x\right)\right|^{p}  dx \right)^{q/p} e^{-q\int_0^t c(s)ds} w(\alpha(t)) (\delta(t))^{1-q}  dt \\
						\label{improper integ}
&=\lim_{\varepsilon \downarrow 0}
\int_0^{T} \left(\int_{\bR^d} \left|f\left(t,x\right)\right|^{p} dx \right)^{q/p}e^{-q\int_0^t c(s)ds} w(\alpha(t) + \varepsilon t) (\delta(t)+\varepsilon)^{1-q}  dt.
\end{align}
\item If
$$
\int_0^T  \left(\int_{\bR^d} |f(s,x)|^p  dx \right)^{q/p} e^{-q\int_0^t c(s)ds} w(\alpha(s)) |\delta(s)|^{1-q} ds  < \infty,
$$
then the local integrability condition on $f$ is not necessary in Theorem \ref{main thm}.
In other words,  the finiteness condition implies the local integrability of $f$. 
To investigate this fact, let $ t \in (0,T)$ and $c>0$. 
Then for any $\varepsilon \in (0,1)$, applying H\"older's inequality and the change of variable $\alpha(t)+\varepsilon t \to t$, 
we have
\begin{align}
						\notag
&\int_0^t \int_{|x| <c} |f(t,x)| dx ds \\
						\notag
&=\int_0^t\int_{\bR^d} |f(t,x)|   1_{|x|<c} dx  1_{0<s<t}ds \\
						\notag
&\leq  N\int_0^t  \left(\int_{\bR^d} |f(t,x)|^p  dx \right)^{1/p}  1_{0<s<t} ds \\
						\notag
&\leq  N \left[\int_0^t  \left(\int_{\bR^d} |f(s,x)|^p  dx \right)^{q/p} e^{-q\int_0^s c(\rho)d\rho} w(\alpha(s)+\varepsilon s) |\delta(s) + \varepsilon|^{1-q} ds \right]^{1/q} \\
						\notag
&\quad \times \left[\int_0^t e^{ \frac{q}{q-1}\int_0^s c(\rho)d\rho} w^{-\frac{1}{q-1}}(\alpha(s) + \varepsilon s) (\delta(s) +\varepsilon)  ds \right]^{(q-1)/q} \\
						\label{eqn 20220916 10}
&\leq  N \left[\int_0^t  \left(\int_{\bR^d} |f(s,x)|^p  dx \right)^{q/p} w(\alpha(s)+\varepsilon s) |\delta(s) + \varepsilon|^{1-q} ds \right]^{1/q} \\
						\notag
&\quad \times e^{ \frac{q}{q-1}\int_0^t |c(s)|ds} \left[\int_0^{\alpha(t)+ t}  w^{-\frac{1}{q-1}}(s)  ds \right]^{(q-1)/q}.
\end{align}
It is obvious that $e^{ \frac{q}{q-1}\int_0^t |c(s)|ds} < \infty$ since the function $c(t)$ is locally integrable. 
Moreover, since $w \in A_p(\bR)$,  $\int_0^{\alpha(t)+ t}  w^{-\frac{1}{q-1}}(s)  ds$ is finite.
Therefore, taking $\varepsilon \to 0$ in \eqref{eqn 20220916 10}, (formally) we obtain the local integrability of $f$.

\item 
\eqref{main est -1} and \eqref{main est 0} hold even for $p=1$ or $p=\infty$ (see Corollary \ref{cor 20220916 01}).
Moreover, it is easy to check that \eqref{main est -1} is a stronger estimate than \eqref{main est 0} with a help of the definition of Muckenhoupt's weight, i.e. \eqref{main est -1} implies \eqref{main est 0}.
However, \eqref{main est 0} can be slightly improved by using the probabilistic representation of a solution in the sense that
it cannot be obtained from \eqref{main est -1} directly in general. 
Indeed, formally using \eqref{strong u est 2} with
$$
h_1(t)= w(\alpha(t)) \delta(t)
$$
and
$$
h_2(t)= w(\alpha(t)) |\delta(t)|^{1-q},
$$
we have
\begin{align*}
&\int_0^T \|u(t,\cdot)\|^q_{L_p}e^{-q\int_0^t c(s)ds}  w(\alpha(t)) \delta(t) dt  \\
&\leq \int_0^T \Bigg[ w(\alpha(t)) \delta(t)  \left[ \int_0^t |w(\alpha(s)) |\delta(s)|^{1-q}|^{-\frac{1}{q-1}} ds \right]^{q-1} \\
&\qquad \times \int_0^t \|f(s,\cdot)\|^q_{L_p} e^{-q\int_0^t c(s)ds} w(\alpha(s)) |\delta(s)|^{1-q}ds \Bigg] dt.
\end{align*}

\item 
Obviously, we can obtain 
\begin{align*}
& \int_0^{T} \left(\int_{\bR^d} \left| u_{xx}\left(t,x\right)(t,x)\right|^{p}   w_0(x)dx \right)^{q/p} 
e^{-q\int_0^t c(s)ds} w(\alpha(t))   \delta(t) dt \\
&\leq N \int_0^{T} \left(\int_{\bR^d} \left|f\left(t,x\right)\right|^{p} w_0(x) dx \right)^{q/p} e^{-q\int_0^t c(s)ds} w(\alpha(t)) |\delta(t)|^{1-q}  dt
\end{align*}
if $w_0(x)$ is bounded both below and above.
However, if $w(x)$ has a degeneracy or a singularity (unboundedness), then we believe that it is impossible to add $w_0 \in A_p(\bR^d)$ in the estimates.  In other words, generally, it is not expected to find a positive constant $N$ such that
\begin{align}
								\notag
& \int_0^{T} \left(\int_{\bR^d} \left| u_{xx}\left(t,x\right)(t,x)\right|^{p}   w_0(x)dx \right)^{q/p} 
e^{-q\int_0^t c(s)ds} w(\alpha(t))   \delta(t) dt \\
								\label{eqn 20220917 01}
&\leq N \int_0^{T} \left(\int_{\bR^d} \left|f\left(t,x\right)\right|^{p} w_0(x) dx \right)^{q/p} e^{-q\int_0^t c(s)ds} w(\alpha(t)) |\delta(t)|^{1-q}  dt.
\end{align}
To claim it, assume that \eqref{eqn 20220917 01} holds with $b(t)=c(t)=0$ for all $t$. 
Then we get
\begin{align}
								\notag
& \int_0^{T} \left(\int_{\bR^d} \left| u_{xx}\left(t,x\right)(t,x)\right|^{p}   w_0(x)dx \right)^{q/p} 
 w(\alpha(t))   \delta(t) dt \\
								\label{eqn 20220917 02}
&\leq N \int_0^{T} \left(\int_{\bR^d} \left|f\left(t,x\right)\right|^{p} w_0(x) dx \right)^{q/p} w(\alpha(t)) |\delta(t)|^{1-q}  dt.
\end{align}
Then the function $v(t,x) = u\left(t,x + \int_0^t b(s)ds \right)$ becomes a solution to 
\begin{align*}
&v_t(t,x)=a^{ij}(t)v_{x^ix^j}(t,x) +b^i(t) v_{x^i}(t,x)+f\left(t,x+ \int_0^t b(s)ds \right),   \\
&v(0,x)=0, 
 \qquad \qquad \qquad \qquad \qquad \qquad \qquad \qquad (t,x) \in (0,T) \times \bR^d.
\end{align*}
Thus by \eqref{eqn 20220917 02} with $p=q=2$ and $\delta(t)=1$, we obtain
\begin{align*}
& \int_0^{T} \int_{\bR^d} \left| v_{xx}\left(t,x\right)(t,x)\right|^{2}   w_0\left(x+\int_0^t b(s)ds \right) w(t)dx 
  dt \\
&\leq N \int_0^{T} \int_{\bR^d} \left|f\left(t,x\right)\right|^{2} w_0\left(x+\int_0^t b(s)ds \right)w(t) dx     dt.
\end{align*}

Observe that 
$$
(t,x) \mapsto w(t)w_0\left(x+\int_0^t b(s)ds \right)  \notin A_2(\bR^{d+1})
$$
unless $\int_0^t b(s)ds$ is a constant vector uniformly for all $t$ since $w$ has a singularity or a degeneracy in general. 
Therefore we cannot expect \eqref{eqn 20220917 01} if there is a non-trivial coefficient $b(t)$ in the equation.
Moreover, our main tool is the probabilistic solution representation such as \eqref{sol re}.
We use the translation invariant property of $L_p$-norms with this representation in many parts of proofs of the main theorem.
Thus \eqref{eqn 20220917 01} is impossible to obtain by our method even for the case $b(t)=0$ for all $t$ (see Remark \ref{fail method}).

\item All constants in estimates in Theorem \ref{main thm} do not depend on the integrals of coefficients $a^{ij}$, $b^i$, and $c$.
Thus for a fixed time $T \in (0, \infty)$,  the integrability condition on the coefficients \eqref{coe local inte} can be relaxed to
\begin{align*}
\int_0^t \left(|a^{ij}(t)|+|b^i(t)|+|c(t)|\right) dt < \infty \qquad \forall t \in (0,T)~\text{and}~ \forall i,j.
\end{align*}
\end{enumerate}
\end{remark}

\mysection{Probabilistic solution representations}
									\label{section representation}

In this section, we consider equations without lower-order terms first, i.e.
\begin{align}
							\notag
&u_t(t,x) 
=  a^{ij}(t)u_{x^ix^j}(t,x) +f(t, x)  \qquad (t,x) \in (0,T) \times \bR^d \\
							\label{classic eqn}
&u(0,x)=0.
\end{align}

Consider a Brownian motion $B_t$ in a filtered probability space $(\Omega, \cF_t, \bP)$ with the usual condition.
It is well-known that any predictable function $\sigma(t): \Omega \times (0,T) \to \bR$ satisfying
$$
\int_0^t |\sigma(s)|^2 ds < \infty \quad (a.s.) \quad \forall t \in [0,t],
$$
the It\^o integral 
$$
X_t=\int_0^t \sigma(s) dB_s
$$
 is well-defined and It\^o's formula works for the stochastic process $f(X_t)$ with a smooth function $f$ (cf. \cite[Chapter 5]{krylov2002introduction}).
Moreover, our solution $u$ to equation \eqref{classic eqn} can be derived from the expectation of a composition of a function $f$ and the stochastic process $X_t$. Here is a more explicit statement.
 \begin{theorem}
						\label{existence thm}
Let $T \in (0,\infty)$ and $f$ be a locally integrable function on $(0,T) \times \bR^d$.
Assume that the function $t \in (0,\infty) \mapsto A(t) := \left( a^{ij}(t) \right)_{d \times d}$ is locally integrable, i.e. for each $i$ and $j$,
\begin{align*}
\int_0^T a^{ij}(t) dt < \infty
\end{align*}
and the coefficients $ \left( a^{ij}(t) \right)_{d \times d}$  are nonnegative, i.e. 
$$
a^{ij}(t) \xi^i \xi^j \geq 0 \quad \forall \xi \in \bR^d~ \text{and}~\forall t \in (0,T].
$$
Then there exists a unique solution $u$ to \eqref{classic eqn} and this solution $u$ is given by
\begin{align}
							\label{sol re}
u(t,x)= \int_0^t \bE \left[ f(s,x+X_t -X_s) \right]ds,
\end{align}
where
\begin{align*}
X_t := \sqrt{2} \int_0^t \sqrt{A}^{ij}(s) dB_s^j,
\end{align*}
and $B_t = (B_t^1, \ldots, B_t^d)$ is a $d$-dimensional Brownian motion (Wiener process) and the integral is It\^o's stochastic integral. 
Moreover, for any $p \in [1,\infty]$, we have
\begin{align}
									\label{2022092301}
\|u(t,\cdot)\|_{L_p} \leq \int_0^t \|f(s,\cdot)\|_{L_p} ds \quad \forall t \in [0,T]
\end{align}
and for any functions $h_1$ and $h_2$ on $[0,T]$ which are positive (a.e.), we have
\begin{align}
								\notag
&\int_0^T \|u(t,\cdot)\|^q_{L_p} h_1(t) dt \\
										\label{strong u est}
&\leq \int_0^T\left[  h_1(t)  \left[ \int_0^t |h_2(s)|^{-\frac{1}{q-1}} ds \right]^{q-1} \int_0^t \|f(s,\cdot)\|^q_{L_p} h_2(s)ds\right]  dt .
\end{align}
\end{theorem}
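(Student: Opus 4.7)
The plan is to construct the solution via the probabilistic representation, verify it satisfies the weak form of the PDE by applying Itô's formula to the test function evaluated along the process, and then derive the two $L_p$ bounds as consequences of translation invariance and Hölder's inequality.

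First I would check that $X_t := \sqrt{2}\int_0^t \sqrt{A}^{ij}(s)\,dB_s^j$ is a well-defined continuous $\bR^d$-valued martingale. Since $(a^{ij}(t))_{d\times d}$ is nonnegative symmetric and locally integrable, the principal square root $\sqrt{A}(t)$ is locally square-integrable (since $\|\sqrt{A}\|^2 = \|A\|$ in spectral norm), so the It\^o integral is meaningful; a direct computation yields $\langle X^i, X^j\rangle_t = 2\int_0^t a^{ij}(r)\,dr$. Applying It\^o's formula to $r\mapsto \varphi(y - X_r + X_s)$ for fixed $s\le r\le t$, $y\in\bR^d$, $\varphi\in C_c^\infty(\bR^d)$, and taking expectation to kill the martingale part gives the key identity
\begin{equation*}
\bE[\varphi(y - X_t + X_s)] - \varphi(y) = \int_s^t a^{ij}(r)\,\bE\bigl[\varphi_{x^ix^j}(y - X_r + X_s)\bigr]\,dr.
\end{equation*}

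Next I would verify that $u$ defined by \eqref{sol re} satisfies the weak formulation \eqref{weak formulation} (with $b^i\equiv c\equiv 0$). By Fubini, $(u(t,\cdot),\varphi) = \int_0^t \bE\bigl[\int_{\bR^d} f(s,y)\varphi(y - X_t + X_s)\,dy\bigr]\,ds$; multiplying the identity above by $f(s,y)$, integrating over $y$ and $s$, and using Fubini once more reproduces exactly the right-hand side of \eqref{weak formulation} with source $f$. For uniqueness I would employ a duality argument: for each $s\in(0,T)$ and $\varphi\in C_c^\infty(\bR^d)$, the backward Gaussian semigroup $\psi(r,x):=\bE[\varphi(x+X_s-X_r)]$, $r\in[0,s]$, is smooth and bounded in $x$ and satisfies $\psi_r + a^{ij}(r)\psi_{x^ix^j}=0$ with terminal data $\varphi$, so a standard integration-by-parts (after a spatial cutoff to localize $\psi$ into the test-function class) forces $(v(s,\cdot),\varphi)=0$ for any locally integrable solution $v$ of the homogeneous problem.

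For the $L_p$ bounds, the representation \eqref{sol re} writes $u(t,\cdot)$ as a mixture, via $\bE$ and $\int_0^t\!ds$, of translates of $f(s,\cdot)$, so Minkowski's inequality combined with translation invariance of $\|\cdot\|_{L_p}$ gives at once $\|u(t,\cdot)\|_{L_p} \le \int_0^t \|f(s,\cdot)\|_{L_p}\,ds$, which is \eqref{2022092301}. For \eqref{strong u est}, raise this bound to the $q$-th power, split $\|f(s,\cdot)\|_{L_p} = \bigl(\|f(s,\cdot)\|_{L_p}\, h_2(s)^{1/q}\bigr)\cdot h_2(s)^{-1/q}$, and apply H\"older with exponents $q$ and $q/(q-1)$ to get
\begin{equation*}
\|u(t,\cdot)\|_{L_p}^q \le \left(\int_0^t h_2(s)^{-\frac{1}{q-1}}\,ds\right)^{q-1}\int_0^t \|f(s,\cdot)\|_{L_p}^q\, h_2(s)\,ds;
\end{equation*}
multiplying by $h_1(t)$ and integrating over $t\in(0,T)$ yields \eqref{strong u est}. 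The main obstacle is making the It\^o/Fubini step rigorous: the coefficient $A(t)$ is only locally integrable and the covariance matrix $\int_s^t A(r)\,dr$ may be rank-deficient on nontrivial intervals, so $X_t-X_s$ can be a genuinely degenerate Gaussian and many of the formal manipulations above must be justified by approximating $A$ by smooth, strictly positive $A_\varepsilon$, verifying the identity and the weak formulation for the resulting nondegenerate Gaussian convolutions of $f$, and then passing to the limit using local integrability of $f$ together with the compact support and smoothness of $\varphi$.
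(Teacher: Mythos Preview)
Your estimate arguments for \eqref{2022092301} and \eqref{strong u est} are exactly the paper's: Minkowski plus translation invariance, then H\"older with the splitting $h_2^{1/q}\cdot h_2^{-1/q}$.

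For existence you take a genuinely different route. The paper applies It\^o's formula to $f\bigl(s,x+\sqrt{2}\int_s^t \sqrt{A}(r)\,dB_r\bigr)$, which forces a three-step approximation of $f$ (smooth $\to$ bounded $\to$ locally integrable via $f^+\!-f^-$ and truncation). You instead apply It\^o to the \emph{test function} $\varphi(y-X_r+X_s)$, which is already $C_c^\infty$, so no regularisation of $f$ is needed beyond the Fubini justifications; the weak formulation drops out directly after pairing with $f(s,y)$ and swapping the $s$- and $r$-integrals. This is cleaner and buys you the general locally integrable case in one stroke, whereas the paper's version has the advantage of producing a pointwise (not just weak) identity when $f$ is smooth.

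For uniqueness the paper does something quite different and more robust: it mollifies $u$ in $x$, obtains a pointwise equation for $u^\varepsilon$, takes the Fourier transform, and applies Gr\"onwall to $|\cF[u^\varepsilon(t,\cdot)](\xi)|$. Your duality idea via $\psi(r,x)=\bE[\varphi(x+X_s-X_r)]$ is natural, but the step ``after a spatial cutoff to localize $\psi$ into the test-function class'' is where the difficulty hides: $\psi(r,\cdot)$ is the convolution of $\varphi$ with a (possibly degenerate) Gaussian and is therefore smooth and bounded but not compactly supported, and a cutoff $\chi_R\psi$ produces error terms of the form $\int a^{ij}(r)\int v(r,x)\,\partial_i\chi_R\,\partial_j\psi\,dx\,dr$ supported in $\{R<|x|<2R\}$. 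For a solution $v$ that is merely locally integrable with no growth control, there is no mechanism to make these vanish as $R\to\infty$. The paper's Fourier--Gr\"onwall argument sidesteps this because it never needs to pair $v$ against a non-compactly-supported function. If you want to keep the duality approach, you should indicate how the class of admissible solutions (or an additional a priori bound) forces the cutoff errors to zero.
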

\begin{proof}

\vspace{2mm}
{\bf Part I.} (Uniqueness)
\vspace{2mm}

Even though the coefficients can be unbounded or degenerate,
the uniqueness of a solution can be easily obtained from a classical Fourier transform method with Gr\"onwall's inequality.
To give a rigorous detail, choose a $\varphi$ which is a nonnegative function in $C_c^\infty(\bR^d)$ with a unit integral. 
For $\varepsilon \in (0,1)$, denote 
$$
\varphi^\varepsilon (x) := \frac{1}{\varepsilon^d} \varphi \left( \frac{x}{\varepsilon} \right).
$$
Let $u$ be a solution to
\begin{align*}
&u_t(t,x) 
=  a^{ij}(t)u_{x^ix^j}(t,x) +f(t, x)  \\
&u(0,x)=0
\end{align*}
and define
$$
u^\varepsilon(t,x) = \int_{\bR^d} u(t,y) \varphi^\varepsilon (x-y)dy.
$$
It is sufficient to show that for any $\varepsilon \in (0,1)$ and $t \in (0,T)$, $u(t,x)=0$ for almost every $x \ in \bR^d$. 
Fix $\varepsilon \in (0,1)$, $t \in (0,T)$, and $ x \in \bR^d$. Recalling the definition of a solution and putting $\varphi^\varepsilon(x-\cdot)$ in \eqref{weak formulation}, we have
\begin{align}
						\label{20221012 01}
u^\varepsilon(t,x)= \int_0^t a^{ij}(s) \left(u^\varepsilon \right)_{x^ix^j}(s,x) ds.
\end{align}
For each $\varepsilon \in (0,1)$ and $t \in (0,T)$, \eqref{20221012 01} holds for all $x \in \bR^d$.
Take the $d$-dimensional Fourier transform with respect to $x$ in \eqref{20221012 01} and absolute value. Then we have
\begin{align}
						\label{20221012 02}
\left| \cF[u^\varepsilon(t,\cdot)](\xi) \right|
\leq \int_0^t \left| a^{ij}(s) \xi^i \xi^j \right| \left|\left[\cF[u^\varepsilon (s,\cdot) \right](\xi) \right| ds   
\end{align} 
for all $\xi \in \bR^d$. 
Note that for each $\varepsilon \in (0,1)$ and $\xi \in \bR^d$, \eqref{20221012 02} holds for all $t \in (0,T)$. Thus finally applying  Gr\"onwall's inequality, we have
$$
|\cF[u^\varepsilon(t,\cdot)](\xi)| =0
$$
for all $\varepsilon \in (0,1)$, $t \in (0,T)$, and $\xi \in \bR^d$, which completes the uniqueness of a solution $u$.

\vspace{2mm}
{\bf Part II.} (Existence)
\vspace{2mm}

The existence of a solution $u$ cannot be shown based on a classical Fourier transform method since the coefficients can be degenerate.
Thus we choose a probabilistic method to show the existence of a solution. 
Since it is a well-known fact if the inhomogeneous term $f$ is smooth (even for more general $f$ in a $L_p$-class).
However, it is not easy to find an appropriate reference which exactly fit to our setting (cf. \cite[Section 3]{kim2019sharp}), we give a proof with a detail.
Our main tools are It\^o's formula and a smooth approximation. 
We divide the proof into three steps.
  
 \vspace{2mm}
{\bf Step 1.} (Smooth case) In this step, we assume that for each $t \in (0,T)$, $f(t,x)$ is twice continuously differentiable with respect to $x$.
\vspace{2mm}

Recall that for each $t$, $\left( a^{ij}(t) \right)_{d \times d}$ is a nonnegative symmetric matrix. 
Then there exists a $d \times d$ matrix $\sqrt{A}(t)$ such that 
$$
A(t) =\sqrt{A}(t) \times \sqrt{A}^\ast(t), 
$$
where $\sqrt{A}^\ast$ denotes the transpose matrix of $\sqrt{A}$.
Recall
$$
X_t = \sqrt 2 \int_0^t \sqrt{A}(s) dB_s.
$$
We claim that the function
\begin{align}
								\label{2022101301}
u(t,x):= \int_0^t \bE \left[ f(s,x+X_t -X_s) \right]ds
\end{align}
becomes a solution to \eqref{main eqn}.
As mentioned before,  it is not easy to show that $u$ defined in \eqref{sol re} becomes a solution to \eqref{main eqn} based on an analytic method such as the Fourier transform since the degeneracy of $a^{ij}(t)$ makes the Fourier transform of $u$ lose the integrability. However, it is still possible to apply It\^o's formula. Fix $ s \in (0,T)$ and $x \in \bR^d$.
Apply It\^o's formula to 
$$
f\left(s, x+ \sqrt 2 \int_s^t \sqrt{A}(r)dB_r \right).
$$
Then we have 
\begin{align}
						\notag
f\left(s, x+ \sqrt 2 \int_s^t \sqrt{A}(r)dB_r \right) 
&= f(s,x)+\int_s^t f_{x^i}\left(s, \int_s^\rho \sqrt{A}(r)dB_r \right) dB_\rho \\
						\label{20221012 10}
& \quad + \int_s^t a^{ij}(\rho)f_{x^i x^j}\left(s, x+\sqrt 2\int_s^\rho \sqrt{A}(r)dB_r \right) d\rho
\end{align}
for all $s\leq t < T$ $(a.s)$. 
Taking the expectations in \eqref{20221012 10}, using the property of the It\^o integral that 
$$
\bE\left[\int_s^t f_{x^i}\left(s, \int_s^\rho \sqrt{A}(r)dB_r \right) dB_\rho \right] = 0,
$$
 and recalling the definition of $X_t$, we have
\begin{align}
							\notag
&\bE\left[f\left(s, x+ X_t-X_s \right)\right]
\\
						\label{20221012 11}
&= f(s,x) +\bE\left[\int_s^t a^{ij}(\rho)f_{x^i x^j}\left(s, x+X_\rho - X_s\right) d\rho \right]
\end{align}
for all $0<s \leq t < T$.
Taking the integration $\int_0^t \cdot ~ds$ to both sides of \eqref{20221012 11} and applying the Fubini Theorem, we have
\begin{align*}
&\int_0^t \bE\left[f\left(s, x+ X_t-X_s \right)\right]ds
\\
&= \int_0^t f(s,x) ds 
+\int_0^t \bE\left[\int_s^t a^{ij}(\rho)f_{x^i x^j}\left(s, x+X_\rho - X_s\right) d\rho \right]ds \\
&= \int_0^t f(s,x) ds 
+\int_0^t a^{ij}(\rho) \int_0^\rho \bE\left[ f_{x^i x^j}\left(s, x+X_\rho - X_s\right)  \right]ds d\rho.
\end{align*}
Finally due to the definition of $u$ in \eqref{2022101301}, we have
\begin{align*}
u_t(t,x) =  a^{ij}(t) u_{x^ix^j}(t,x) + f(t,x)
\end{align*}
for all $t \in (0,T)$ and $x \in \bR^d$.

 \vspace{2mm}
{\bf Step 2.} (Bounded case) In this step, we assume that $f$ is bounded. 
\vspace{2mm}

We use Sobolev's mollifiers.
For $\varepsilon \in (0,1)$, denote
$$
f^{\varepsilon} (t,x) = \int_{\bR^d} f(x-\varepsilon y) \varphi (y)dy,
$$
and 
\begin{align*}
u^\varepsilon (t,x) = \int_0^t \bE \left[ f^\varepsilon(s,x+X_t -X_s) \right]ds
\end{align*}
for all $t \in (0,T)$ and $x \in \bR^d$.
Then by the result in Step 1, we have
\begin{align*}
u^\varepsilon_t(t,x) =  a^{ij}(t) u^\varepsilon_{x^ix^j}(t,x) + f^\varepsilon(t,x) .
\end{align*}
In particular, applying the integration by parts, for any $\phi \in C_c^\infty(\bR^d)$, we have
\begin{align*}
(u^\varepsilon(t,\cdot), \phi) 
= \int_0^t \left(u^\varepsilon(s,\cdot), a^{ij}(s)\phi_{x^ix^j} \right) ds +\int_0^t \left(f^\varepsilon(s,\cdot), \phi \right) ds
\quad \forall t \in (0,T).
\end{align*}
Since $f$ is bounded, applying the dominate convergence theorem one can easily check that
$$
u(t,x) 
:=\int_0^t \bE \left[ f(s,x+X_t -X_s) \right]ds
= \limsup_{\varepsilon \downarrow 0} u^\varepsilon(t,x)~(a.e.)
$$
and it becomes a solution to \eqref{classic eqn}.

 \vspace{2mm}
{\bf Step 3.} (General case)
\vspace{2mm}

It suffices to remove the condition that $f$ is bounded.
Due to the linearity of equation \eqref{classic eqn} and the trivial decomposition $f(t,x)=f^+(t,x) - f^-(t,x)$,
we may assume that $f$ is nonnegative, where $f^+(t,x)= \frac{|f(t,x)|+f(t,x)}{2}$ and $f^-(t,x)= \frac{|f(t,x)|-f(t,x)}{2}$.
For $M>0$, define $f^M(t,x) := f(t,x) \wedge M := \min\{ f(t,x) , M\}$ and denote
\begin{align*}
u^M (t,x) = \int_0^t \bE \left[ f^M(s,x+X_t -X_s) \right]ds.
\end{align*}
Then by the result of step 2, for any $M>0$, we have
\begin{align}
								\label{20221013 10}
(u^M(t,\cdot), \phi) 
= \int_0^t \left(u^M(s,\cdot), a^{ij}(s)\phi_{x^ix^j} \right) ds +\int_0^t \left(f^M(s,\cdot), \phi \right) ds
\quad \forall t \in (0,T).
\end{align}
It is obvious that $u^M(t,x) \to u(t,x)$ for all $t \in (0,T)$ and $x \in \bR^d$ as $M \to \infty$.
Finally, taking $M \to \infty$ and applying the monotone and dominate convergence theorems in \eqref{20221013 10}, we show that $u$ is a solution to \eqref{classic eqn}.

\vspace{2mm}
{\bf Part III.} (Estimate)
\vspace{2mm}

We prove \eqref{2022092301} and \eqref{strong u est}. 
By \eqref{sol re}, the generalized Minkowski inequality, and the translation invariant property of the $L_p$-space,
\begin{align*}
\|u(t,\cdot)\|_{L_p} \leq \int_0^t \|f(s,\cdot)\|_{L_p} ds.
\end{align*}
Moreover, applying H\"older's inequality, we have
\begin{align*}
\int_0^T \|u(t,\cdot)\|^q_{L_p} h_1(t) dt
&\leq    \int_0^T h_1(t) \int_0^t \|f(s,\cdot)\|^q_{L_p} h_2(s)ds \left[ \int_0^t |h_2(s)|^{-\frac{1}{q-1}} ds \right]^{q-1} dt.
\end{align*}

\end{proof}

\begin{remark}
Assume that 
$$
\int_0^T \|f(s,\cdot)\|_{L_p} dt <\infty.
$$
Then due to \eqref{2022092301} and the linearity of \eqref{classic eqn}, one can easily find a continuous modification of $u$ so that
\begin{align*}
\sup_{t \in [0,T]} \|u(t,\cdot)\|_{L_p} \leq \int_0^T \|f(s,\cdot)\|_{L_p} ds \quad \forall t \in [0,T].
\end{align*}
\end{remark}

\begin{corollary}
							\label{cor 20220916 01}
Let $T \in (0,\infty)$,  $p \in [1,\infty]$, and $q \in (1,\infty)$.
Suppose that Assumption \ref{main as} holds. 
Additionally, assume that $h_1$ and $h_2$ are functions on $[0,T]$ which are positive (a.e.).
Then for any  locally integrable function $f$ on $(0,T) \times \bR^d$, there is a unique solution $u$ to equation \eqref{main eqn} such that
\begin{align}
							\notag
& \sup_{t \in [0,T]} \left[ \|u(t,\cdot)\|^q_{L_p}e^{-q\int_0^t c(s)ds} \right]   \\
							\label{strong u est 3}
&\leq    \left[ \int_0^T |h_2(t)|^{-\frac{1}{q-1}} dt \right]^{q-1}   \int_0^T e^{-q\int_0^t c(s)ds} \|f(t,\cdot)\|^q_{L_p} h_2(t)dt.
\end{align}
and
\begin{align}
							\notag
&\int_0^T\|u(t,\cdot)\|^q_{L_p}e^{-q\int_0^t c(s)ds} h_1(t) dt  \\
							\label{strong u est 2}
&\leq \int_0^T \left[ h_1(t)   \left[ \int_0^t  |h_2(s)|^{-\frac{1}{q-1}} ds \right]^{q-1}  \int_0^t e^{-q\int_0^s c(\rho)d\rho} \|f(s,\cdot)\|^q_{L_p} h_2(s)ds \right] dt.
\end{align}
\end{corollary}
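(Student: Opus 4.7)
The plan is to reduce \eqref{main eqn} to the lower-order-free equation \eqref{classic eqn}, for which Theorem~\ref{existence thm} already supplies existence, uniqueness, a probabilistic representation, and the two basic estimates. I would set
\[
\beta(t):=\int_0^t b(s)\,ds,\qquad C(t):=\int_0^t c(s)\,ds,
\]
both absolutely continuous on $[0,T]$ thanks to \eqref{coe local inte}, and introduce the change of unknown
\[
v(t,x):=e^{-C(t)}u(t,x-\beta(t)),\qquad \tilde f(t,x):=e^{-C(t)}f(t,x-\beta(t)).
\]
A direct chain-rule calculation formally shows that $u$ solves $u_t=a^{ij}u_{x^ix^j}+b^iu_{x^i}+cu+f$ if and only if $v$ solves $v_t=a^{ij}v_{x^ix^j}+\tilde f$: the drift is absorbed by the spatial translation, the zero-order term by the exponential factor. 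To justify the equivalence at the weak level of Definition~\ref{def sol} with merely locally integrable $u$, $b^i$, and $c$, I would pass to the spatial mollification $u^\varepsilon(t,x):=\int u(t,y)\varphi^\varepsilon(x-y)\,dy$; testing \eqref{weak formulation} against $\varphi^\varepsilon(x-\cdot)$ makes $u^\varepsilon$ absolutely continuous in $t$ with a pointwise equation, so the genuine chain rule applies to $v^\varepsilon(t,x):=e^{-C(t)}u^\varepsilon(t,x-\beta(t))$, and $\varepsilon\downarrow 0$ transfers the equivalence (in both directions) to the weak formulation.

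Once the reduction is in place, Theorem~\ref{existence thm} applied to $v$ with source $\tilde f$ yields a unique $v$ given by \eqref{sol re}; inverting the transformation then produces the unique solution
\[
u(t,x)=\int_0^t e^{\int_s^t c(r)\,dr}\,\bE\!\left[f\!\left(s,\,x+\beta(t)-\beta(s)+X_t-X_s\right)\right]ds
\]
of \eqref{main eqn}. Uniqueness falls out of the same reduction: any two weak solutions $u_1,u_2$ of \eqref{main eqn} yield weak solutions $v_1,v_2$ of the reduced equation with the same source, forcing $v_1=v_2$ by Theorem~\ref{existence thm} and hence $u_1=u_2$. The translation invariance of $L_p(\bR^d)$ (valid for every $p\in[1,\infty]$) gives $\|v(t,\cdot)\|_{L_p}=e^{-C(t)}\|u(t,\cdot)\|_{L_p}$ and $\|\tilde f(s,\cdot)\|_{L_p}=e^{-C(s)}\|f(s,\cdot)\|_{L_p}$; feeding these identities into \eqref{2022092301} applied to $v$ yields the key pointwise bound
\[
e^{-C(t)}\|u(t,\cdot)\|_{L_p}\le\int_0^t e^{-C(s)}\|f(s,\cdot)\|_{L_p}\,ds,\qquad t\in[0,T].
\]

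From here \eqref{strong u est 3} and \eqref{strong u est 2} drop out by exactly the H\"older manipulation used in Part~III of Theorem~\ref{existence thm}: splitting the integrand as $\bigl(h_2(s)^{1/q}e^{-C(s)}\|f(s,\cdot)\|_{L_p}\bigr)\cdot h_2(s)^{-1/q}$ and applying H\"older with conjugate exponents $q$ and $q/(q-1)$ gives
\[
e^{-qC(t)}\|u(t,\cdot)\|_{L_p}^q\le\left[\int_0^t|h_2(s)|^{-\frac{1}{q-1}}ds\right]^{q-1}\int_0^t e^{-qC(s)}\|f(s,\cdot)\|_{L_p}^q\,h_2(s)\,ds.
\]
Taking the supremum over $t\in[0,T]$ produces \eqref{strong u est 3}, and multiplying by $h_1(t)$ and integrating in $t$ produces \eqref{strong u est 2}. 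The one genuinely technical step is the weak-level verification of the change of unknown under merely locally integrable coefficients and data; the mollification argument above handles it, and everything else is a transcription of the corresponding steps in Theorem~\ref{existence thm}.
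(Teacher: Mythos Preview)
Your proposal is correct and follows essentially the same route as the paper: reduce \eqref{main eqn} to \eqref{classic eqn} via the change of unknown $v(t,x)=e^{-\int_0^t c}\,u(t,x-\int_0^t b)$, invoke Theorem~\ref{existence thm} for $v$, and then transfer \eqref{2022092301} and \eqref{strong u est} back to $u$ using the translation invariance of $\|\cdot\|_{L_p}$ together with H\"older's inequality. Your mollification step to justify the weak-level equivalence is a bit more explicit than the paper's direct computation, but the argument is otherwise the same.
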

\begin{proof}

Let $v$ be a solution to the equation 
\begin{align*}
&v_t(t,x)=a^{ij}(t)v_{x^ix^j}(t,x) + e^{-\int_0^t c(s)ds} f\left(t,x-\int_0^t b(s)ds\right),   \\
&v(0,x)=0, 
 \qquad \qquad \qquad \qquad \qquad \qquad \qquad \qquad (t,x) \in (0,T) \times \bR^d.
\end{align*}
Define $U(t,x)= e^{\int_0^t c(s)ds} v\left(t,x+\int_0^t b(s)ds\right)$, where $b(t)= (b^1(t), \ldots, b^d(t))$. 
Then
\begin{align*}
&U_t(t,x)  \\
&= c(t)U(t,x) +  e^{\int_0^t c(s)ds} \left(v_t\left(t,x+\int_0^t b(s)ds\right) +  b^i(t)v_{x^i}\left(t,x+\int_0^t b(s)ds\right)  \right) \\
&=c(t)U(t,x) \\
&\quad +  e^{\int_0^t c(s)ds} \left( a^{ij}(t)v_{x^ix^j}\left(t,x+\int_0^tb(s)ds\right) + e^{-\int_0^t c(s)ds} f(t,x) \right)\\
&\quad +  e^{\int_0^t c(s)ds} \left(b^i(t)v_{x^i}\left(t,x+\int_0^t b(s)ds\right)\right) \\
&=a^{ij}(t)U_{x^ix^j}(t,x) + b^i(t) U_{x^i}(t,x) + c(t) U(t,x)+ f(t,x)
\end{align*}
and
\begin{align*}
U(0,x)=0.
\end{align*}
Thus by the uniqueness of a solution, the solution $u$ to \eqref{main eqn} is given by
$$
u(t,x)=e^{\int_0^t c(s)ds} v\left(t,x+\int_0^t b(s)ds\right)
$$
and obviously
$$
v(t,x)=e^{-\int_0^t c(s)ds} u\left(t,x-\int_0^t b(s)ds\right).
$$
Applying \eqref{strong u est} to $v$ and using the translation invariant property of $L_p$-norms,  we obtain \eqref{strong u est 2}.
Moreover, by \eqref{2022092301} and H\"older's inequality, for any $0\leq t \leq T$, we have
\begin{align*}
&e^{-q\int_0^t c(s)ds}\|u(t,\cdot)\|^q_{L_p} \\
&=\|v(t,\cdot)\|^q_{L_p} \\
&\leq \int_0^t e^{-q\int_0^s c(\rho)d\rho}  \|f(s,\cdot)\|^q_{L_p} h_2(s) ds \left[ \int_0^t |h_2(s)|^{-\frac{1}{q-1}} ds \right]^{q-1} \\
&\leq \int_0^T  e^{-q\int_0^t c(s)ds} \|f(t,\cdot)\|^q_{L_p} h_2(t) dt \left[ \int_0^T |h_2(s)|^{-\frac{1}{q-1}} ds \right]^{q-1},
\end{align*}
which obviously implies \eqref{strong u est 3}.
\end{proof}

\mysection{Estimates for non-degenerate equations}
									\label{section non}

We start the section by reviewing previous weighted estimates with uniform elliptic and bounded coefficients and apply these estimates to our model equation \eqref{classic eqn}.  
We denote 
\begin{align*}
\|f\|_{L_{p,q}(T,w)} 
= \left( \int_0^T \left(\int_{\bR^d} |f(t,x)|^{p} dx \right)^{q/p} w(t)dt \right)^{1/q}.
\end{align*}
As usual, 
$L_{p,q}(T,w)$ denote the spaces of all locally integrable functions $f$ on $(0,T) \times \bR^d$ such that 
$\|f\|_{L_{p,q}(T,w)}  <\infty$.

\begin{theorem}
					\label{classic est 2}
Let $T \in (0,\infty)$, $p,q \in (1,\infty)$, and $w \in A_q (\bR)$.
Assume that the coefficients $a^{ij}(t)$ are uniformly bounded and elliptic, i.e. there exist positive constants $M$ and $\delta$ such that
\begin{align}
							\label{uniform elliptic}
M |\xi|^2 \geq  a^{ij}(t) \xi^i \xi^j \geq \delta |\xi|^2 \quad \forall \xi \in \bR^d.
\end{align}
Then for any $f \in L_{p,q}(T,w)$, there exists a unique solution $u$ to \eqref{classic eqn}
such that
\begin{align}
									\notag
&\left( \int_0^T \left(\int_{\bR^d} |u_{xx}(t,x)|^{p} dx \right)^{q/p} w(t)dt \right)^{1/q} \\
									\label{eqn 20220825 01}
&\qquad \leq N  \left( \int_0^T \left(\int_{\bR^d} |f(t,x)|^{p}  dx \right)^{q/p} w(t)dt \right)^{1/q},
\end{align}
where
$$
N = N\left(p,q,M,\delta,[w]_{A_q(\bR)} \right).
$$
\end{theorem}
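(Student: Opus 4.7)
I would view $u_{xx}$ as an operator-valued singular integral kernel acting in time and then apply the vector-valued weighted Calder\'on--Zygmund theory for $A_q(\mathbb{R})$-weights on $\mathbb{R}$. Existence of $u$ is already supplied by Theorem \ref{existence thm}, and uniqueness was established there by the Fourier transform together with Gr\"onwall's lemma (without any ellipticity assumption), so the task reduces to proving the \emph{a priori} estimate \eqref{eqn 20220825 01}.

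Starting from the probabilistic formula \eqref{sol re} and integrating by parts in the spatial variable (first for $f \in C_c^\infty$), one obtains
$$u_{x^i x^j}(t,x) = \int_0^t \bigl(\partial_{x^i}\partial_{x^j} G_{s,t} \ast_x f(s,\cdot)\bigr)(x)\,ds,$$
where $G_{s,t}$ is the centered Gaussian density on $\mathbb{R}^d$ with covariance $2\int_s^t A(r)\,dr$. By \eqref{uniform elliptic} this covariance has eigenvalues in $[2\delta(t-s),\,2M(t-s)]$, so $G_{s,t}$ and its derivatives enjoy uniform Gaussian bounds with constants depending only on $M,\delta,d$. Regarding $T: f \mapsto u_{xx}$ as a linear operator on time-functions valued in $L_p(\mathbb{R}^d)$, its time-kernel $K(t,s): g \mapsto \partial_{x^i}\partial_{x^j} G_{s,t} \ast g$ satisfies $\|K(t,s)\|_{\mathcal{L}(L_p)} \lesssim |t-s|^{-1}$ by Young's inequality, and the H\"ormander-type smoothness $\|\partial_t K(t,s)\|_{\mathcal{L}(L_p)} + \|\partial_s K(t,s)\|_{\mathcal{L}(L_p)} \lesssim |t-s|^{-2}$ (a $t$- or $s$-derivative differentiates the Gaussian exponent and produces one additional spatial derivative, with an extra factor bounded through $M$).

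Next, I would invoke the classical unweighted $L_p((0,T)\times \mathbb{R}^d)$-maximal regularity for bounded uniformly elliptic time-measurable coefficients (available for instance from the references listed in the introduction such as \cite{Krylov2008}) to obtain the $L_p \to L_p$ boundedness of $T$ needed to initialize the CZ machinery. Combined with the operator-kernel bounds above, the vector-valued weighted Calder\'on--Zygmund theorem upgrades this to the $A_q(\mathbb{R})$-weighted $L_q(L_p)$ bound \eqref{eqn 20220825 01}, with constant depending only on $p,q,d,M,\delta,[w]_{A_q(\mathbb{R})}$.

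The main obstacle is verifying the smoothness estimate on $K(t,s)$ rigorously when $A(\cdot)$ is only measurable in $t$. This is best handled on the Fourier side, where $\partial_s \widehat{G_{s,t}}(\xi) = a^{ij}(s)\,\xi^i\xi^j\, \widehat{G_{s,t}}(\xi)$; each $t$- or $s$-derivative multiplies the Fourier symbol by $a^{ij}\xi^i\xi^j$, bounded in absolute value by $M|\xi|^2$. The $L_1$-norm of the resulting symbol after the two additional spatial derivatives remains $\lesssim (t-s)^{-2}$ uniformly, giving the required smoothness bound with constants depending only on $d,M,\delta$.
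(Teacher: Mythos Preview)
Your approach is sound, but you should know that the paper does not actually prove this theorem: it simply cites \cite[Theorem 2.2]{dong2021approach} and \cite[Theorem 2.14]{choi2022weighted} as readily available sources and moves on. So you are supplying an argument where the paper supplies a reference.

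The route you sketch---writing $u_{xx}$ as a time-convolution with the operator-valued kernel $K(t,s)=\partial_{x^ix^j}G_{s,t}\ast(\cdot)$, verifying the size bound $\|K(t,s)\|_{\mathcal L(L_p)}\lesssim|t-s|^{-1}$ and the regularity bound $\|\partial_tK(t,s)\|_{\mathcal L(L_p)}\lesssim|t-s|^{-2}$, and then appealing to the vector-valued weighted Calder\'on--Zygmund theorem---is exactly one of the standard ways such mixed-norm weighted estimates are proved, and in spirit it is what underlies the cited references (Dong--Kim proceed via mean-oscillation estimates, Choi--Kim via pseudo-differential calculus; both ultimately feed into the same CZ/extrapolation machinery). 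Your treatment of the measurability issue is correct in substance: since $G_{s,t}$ depends on $(s,t)$ only through the absolutely continuous matrix $\int_s^t A(r)\,dr$, the $t$- and $s$-derivatives exist for a.e.\ $t,s$ and the pointwise bound $|a^{ij}(t)|\le M$ (available from symmetry and \eqref{uniform elliptic}) gives $\|\partial_tK(t,s)\|_{\mathcal L(L_p)}\le CM|t-s|^{-2}$, which suffices for the integral H\"ormander condition after an application of the fundamental theorem of calculus.

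One small point of phrasing: in your last paragraph you speak of the ``$L_1$-norm of the resulting symbol,'' but what you need (and what you are really computing) is the $L_1(\bR^d_x)$-norm of the \emph{kernel} $\cF^{-1}[\xi_i\xi_j\xi_k\xi_l\,\widehat{G_{s,t}}]$, which is $\lesssim (t-s)^{-2}$ by scaling. This is a wording slip rather than a gap.
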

\begin{proof}
It is a well-known result which could be easily obtained by combining some classical results.
However, it is not easy to find a paper covering the result directly.
Thus we refer two recent papers \cite[Theorem 2.2]{dong2021approach} handling more general coefficients and \cite[Theorem 2.14]{choi2022weighted} studying time measurable pseudo-differential operators.
\end{proof}

\begin{remark}
Theorem \ref{classic est 2} is enough for our application.
However, as shown in \cite[Theorem 2.2]{dong2021approach} and \cite[Theorem 2.14]{choi2022weighted}, $w_0(x) \in A_p(\bR^d)$ can be inside \eqref{eqn 20220825 01} if \eqref{uniform elliptic} holds. 
In other words, we can find a positive constant $N$ such that
such that
\begin{align*}
&\left( \int_0^T \left(\int_{\bR^d} |u_{xx}(t,x)|^{p} w_0(x)dx \right)^{q/p} w(t)dt \right)^{1/q} \\
&\qquad \leq N  \left( \int_0^T \left(\int_{\bR^d} |f(t,x)|^{p}  w_0(x) dx \right)^{q/p} w(t)dt \right)^{1/q},
\end{align*}
where
$$
N = N\left(p,q,M,\delta,[w]_{A_q(\bR)},[w_0]_{A_p(\bR^d)} \right).
$$
\end{remark}

Next we want to enhance Theorem \ref{classic est 2}.
Specifically, we show the constant $N$ in \eqref{eqn 20220825 01} is independent of the upper bound $M$ of the coefficients $a^{ij}(t)$ and more precise relation between the constant $N$ and the elliptic constant $\delta$. 
However, it seems to be almost impossible to prove it with only analytic tools. 
Thus we recall probabilistic representations of solutions to upgrade Theorem \ref{classic est 2}.
\begin{theorem}
					\label{enhance est thm}
Let $T \in (0,\infty)$, $p,q \in (1,\infty)$, and $w \in A_q (\bR)$.
Assume that the coefficients $a^{ij}(t)$ are uniformly elliptic, i.e. there exists a positive constant $\delta$ such that
\begin{align}
							\label{strong elliptic}
a^{ij}(t) \xi^i \xi^j \geq \delta |\xi|^2 \quad \forall \xi \in \bR^d.
\end{align}
Additionally, we assume that the coefficients $a^{ij}(t)$ are locally integrable, i.e.
\begin{align*}
\int_0^t a^{ij}(s) ds < \infty \qquad \forall t \in (0,T).
\end{align*}
Then for any $f \in L_{p,q}(T,w)$, there exists a unique solution $u$ to \eqref{classic eqn} 
such that
\begin{align}
								\label{enhance est}
 \int_0^T \left(\int_{\bR^d} |u_{xx}(t,x)|^{p} dx \right)^{q/p} w(t)dt 
 \leq \frac{N}{\delta^q}   \int_0^T \left(\int_{\bR^d} |f(t,x)|^{p}  dx \right)^{q/p} w(t)dt,
\end{align}
where 
$$
N = N\left(p,q,[w]_{A_q(\bR)}\right).
$$
\end{theorem}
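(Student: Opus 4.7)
The plan is to couple the probabilistic representation of Theorem \ref{existence thm} with a Gaussian decomposition of the coefficient matrix $A(t)=(a^{ij}(t))$, reducing \eqref{classic eqn} pathwise to a constant-coefficient heat equation of diffusivity $\delta$, and then to apply Theorem \ref{classic est 2} pathwise and extract the factor $\delta^{-q}$ by rescaling.

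Concretely, I would write $A(t) = \delta I + \tilde A(t)$, which is pointwise positive semi-definite by \eqref{strong elliptic}. On an enlarged probability space carrying two independent $d$-dimensional Brownian motions $W^1 \perp W^2$, set
\[
X^{(1)}_t := \sqrt{2\delta}\, W^1_t, \qquad X^{(2)}_t := \sqrt 2 \int_0^t \sqrt{\tilde A(r)}\, dW^2_r.
\]
Since $X^{(1)}+X^{(2)}$ has the same Gaussian law as the process $X$ appearing in Theorem \ref{existence thm}, the representation \eqref{sol re} can be rewritten with $X^{(1)}+X^{(2)}$ in place of $X$. For each realization $\omega$ of $W^2$, I define $\tilde f^\omega(s,y) := f(s, y - X^{(2)}_s(\omega))$ and let $\tilde u^\omega$ be the solution (again via Theorem \ref{existence thm}) of the constant-coefficient equation $\tilde u^\omega_t = \delta \Delta \tilde u^\omega + \tilde f^\omega$ with zero initial datum. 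A regrouping of the shift inside $f$ combined with Fubini then yields the key identity
\[
u(t,x) = \bE_{W^2}\!\left[\tilde u^\omega\!\left(t, x + X^{(2)}_t(\omega)\right)\right].
\]

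From here I would differentiate twice in $x$ (legitimate for smooth $f$), take $L_p(\bR^d)$-norms via the generalized Minkowski inequality, use translation invariance of the $L_p$-norm to erase the random shift $X^{(2)}_t(\omega)$, apply Jensen's inequality to bring the $q$-th power inside the expectation, and swap $t$ and $\omega$ via Fubini to obtain
\[
\int_0^T \|u_{xx}(t,\cdot)\|_{L_p}^q w(t)\,dt \le \bE_{W^2}\!\left[\int_0^T \|\tilde u^\omega_{xx}(t,\cdot)\|_{L_p}^q w(t)\,dt\right].
\]
For each $\omega$, Theorem \ref{classic est 2} applied to $\tilde u^\omega$ (whose coefficients are $\equiv \delta I$, so $M=\delta$) yields a bound with constant $N(p,q,\delta,\delta,[w]_{A_q})$, and translation invariance gives $\|\tilde f^\omega(t,\cdot)\|_{L_p} = \|f(t,\cdot)\|_{L_p}$, so that the right-hand side is deterministic. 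A parabolic rescaling $v(\tau,x) := u(\tau/\delta, x)$ paired with the weight $\bar w(\tau) := w(\tau/\delta)$ (for which $[\bar w]_{A_q}=[w]_{A_q}$) reveals that this constant is of the form $N(p,q,[w]_{A_q})\,\delta^{-q}$, producing \eqref{enhance est}.

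The main obstacle I anticipate is making the pathwise representation and all the interchanges rigorous for general $f \in L_{p,q}(T,w)$ with coefficients $a^{ij}$ that may be unbounded. I would first run the whole chain for $f \in C_c^\infty((0,T) \times \bR^d)$, where Theorem \ref{existence thm} delivers a solution smooth in $x$ and every differentiation, dominated-convergence argument and application of Fubini is routine; then I would pass to general $f$ by mollification, linearity of \eqref{classic eqn}, and the uniform a priori estimate just obtained. Measurability of $\omega \mapsto \tilde u^\omega$ is inherited from the explicit integral formula in Theorem \ref{existence thm}, which makes the invocations of Fubini and Jensen unproblematic.
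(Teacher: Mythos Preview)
Your proposal is correct and mirrors the paper's proof almost exactly: the same decomposition $A(t)=\delta I+\tilde A(t)$, the same two independent Brownian motions, the same pathwise identification $u(t,x)=\bE_{W^2}[\tilde u^\omega(t,x+X^{(2)}_t)]$, and the same chain of Minkowski, translation invariance, Jensen and Fubini. The only cosmetic difference is in how the $\delta^{-q}$ is extracted from the constant-coefficient estimate: the paper rescales in space via $v(t,x)=u(t,\sqrt\delta\,x)$ to land on $\Delta$ with ellipticity $1$, whereas you rescale in time via $v(\tau,x)=u(\tau/\delta,x)$ with the dilated weight $\bar w(\tau)=w(\tau/\delta)$; both computations yield the same factor.
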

\begin{proof}
{\bf (Step 1) $a^{ij}(t)u_{x^ix^j}= \delta \Delta u$}.
\vspace{2mm}

For this simple case, we use a basic scaling property of the equation. 
Put $v(t,x) = u(t,\sqrt \delta x)$.
Since $u$ is the solution to 
\begin{align*}
&u_t(t,x) 
=  \delta \Delta u(t,x) +f(t, x)  \\
&u(0,x)=0,
\end{align*}
we have
\begin{align*}
&v_t(t,x) 
=   \Delta v(t,x) +f(t, \sqrt \delta x)  \\
&v(0,x)=0.
\end{align*}
Thus applying \eqref{eqn 20220825 01}, we have
\begin{align*}
&\left( \int_0^T \left(\int_{\bR^d} |v_{xx}(t,x)|^{p}  dx \right)^{q/p} w(t)dt \right)^{1/q} \\
&\qquad \leq N  \left( \int_0^T \left(\int_{\bR^d} |f(t, \sqrt \delta  x)|^{p} dx \right)^{q/p} w(t)dt \right)^{1/q},
\end{align*}
where
$$
N = N\left(p,q,[w]_{A_q(\bR)} \right).
$$
Finally, we obtain \eqref{enhance est} by the simple change of the variable $\sqrt \delta x \to x$.

\vspace{2mm}
{\bf (Step 2)} General $a^{ij}(t)u_{x^ix^j}$.
\vspace{2mm}

To prove a general case, we use probabilistic solution representations. 
We may assume that 
$$
\int_0^T a^{ij}(t) dt < \infty
$$
since the constant $N$ in \eqref{enhance est} is independent of $T$.
Additionally, due to the trivial constant extension $a^{ij}(t) 1_{ t \in (0,T)} + a^{ij}(T) 1_{ t  \geq T}$, 
we may assume that $a^{ij}(t)$ is defined on $(0,\infty)$.
Consider two independent $d$-dimensional Brownian motions $B_t$ and $W_t$
in a probability space $(\Omega, \cF_t, \bP)$.
Set
$$
\left( a^{ij}(t) \right)_{d \times d} =A(t) =\sqrt{A}(t) \times \sqrt{A}^\ast(t),
$$
\begin{align*}
X_t := \sqrt{2} \int_0^t \sqrt{A}^{ij}(s) dB_s^j,
\end{align*}
\begin{align*}
X^2_t := \sqrt{2} \int_0^t \left(\sqrt{A(s) - \delta {I}}^{ij} \right) dB_s^j,
\end{align*}
\begin{align*}
X^1_t := \sqrt{2} \sqrt \delta {I}^{ij} W^j_t,
\end{align*}
where $I=(I^{ij})_{d \times d}$ denotes the $d$ by $d$ identity matrix whose diagonal entries are 1 and the other entries are zero and
$\sqrt{{A}(s) - \delta {I}}$ is a matrix so that 
$$
\sqrt{{A}(s) - \delta {I}} \sqrt{{A}(s) - \delta {I}} = A(s)-\delta I,
$$
which exists due to \eqref{strong elliptic}, i.e. $A(s)-\delta I$ is a nonnegative symmetric matrix.
Then due to \eqref{sol re}, the solution $u$ is given by
\begin{align}
									\notag
u(t,x)
&= \int_0^t \bE \left[ f(s,x+X_t -X_s) \right]ds  \\
									\label{eqn 20220722 10}
&= \int_0^t \bE \left[ f(s,x+X^1_t -X^1_s + X^2_t -X^2_s) \right]ds,
\end{align}
where the last equality is due to the fact that two probabilistic distributions of $X_t -X_s$ and $X^1_t -X^1_s + X^2_t -X^2_s$ are equal for all $0<s<t$. 
Moreover, due to the independence of two Brownian motions $B_t$ and $W_t$, we can split the random parameters in \eqref{eqn 20220722 10}.
Additionally, applying Fubini's theorem we have
\begin{align}
u(t,x)
							\notag
&=\int_0^t \bE \left[ f(s,x+X^1_t -X^1_s + X^2_t -X^2_s) \right]ds \\
							\notag
&=\int_0^t \bE' \left[ \bE \left[ f(s,x+X^1_t(\omega) -X^1_s(\omega) + X^2_t(\omega') -X^2_s(\omega')) \right] \right]ds \\
							\label{eqn 20220722 100}
&=\bE'\left[ \int_0^t  \bE \left[ f(s,x+X^1_t(\omega) -X^1_s(\omega) + X^2_t(\omega') -X^2_s(\omega')) \right]ds \right].
\end{align}
For each fixing $\omega'$, 
the function 
\begin{align*}
v^{\omega'}(t,x) :=\int_0^t  \bE \left[ f(s,x+X^1_t(\omega) -X^1_s(\omega)  -X^2_s(\omega')) \right]ds
\end{align*}
becomes a solution to the equation 
\begin{align*}
&v^{\omega'}_t(t,x) 
=   \delta \Delta v^{\omega'}(t,x) +f(t, x-X^2_t(\omega'))  \\
&v^{\omega'}(0,x)=0.
\end{align*}
Thus by the result in {\bf Step 1},
\begin{align}
											\label{eqn 20220725 01}
\int_0^T  \left(\int_{\bR^d} | v^{\omega'}_{xx}(t,  x)|^p  dx \right)^{q/p} w(t)dt 
\leq \frac{N}{\delta^q}\int_0^T  \left(\int_{\bR^d} |f(t, x-X^2_t(\omega'))|^p  dx \right)^{q/p} w(t)dt,
\end{align}
where  $N$ depends only on $p$, $q$, $[w]_{A_q(\bR)}$, and $\kappa$.
Moreover, by \eqref{eqn 20220722 100},
\begin{align}
							\label{v  repre}
u_{xx}(t,x)=\bE'\left[  v_{xx}^{\omega'}\left(t,x + X_t^2(\omega') \right) \right].
\end{align}
Finally applying \eqref{v repre}, \eqref{eqn 20220725 01}, the generalized Minkowski's inequality, and Jensen's inequality, we have
\begin{align*}
&\int_0^T  \left(\int_{\bR^d} | u_{xx}(t,  x)|^p  dx \right)^{q/p} w(t)dt \\
&\leq N\bE' \left[  \int_0^T  \left(\int_{\bR^d} | v^{\omega'}_{xx}(t,  x + X_t^2(\omega'))|^p dx \right)^{q/p} w(t)dt \right] \\
&\leq \frac{N}{\delta^q}\int_0^T  \left(\int_{\bR^d} |f(t, x)|^p w\left(x+k(t)\right) dx \right)^{q/p} w(t)dt.
\end{align*}
\end{proof}

\begin{remark}
						\label{fail method}
We hope that there is a positive constant $N$ such that
such that
\begin{align*}
&\left( \int_0^T \left(\int_{\bR^d} |u_{xx}(t,x)|^{p} dx \right)^{q/p} w(t)dt \right)^{1/q} \\
&\qquad \leq \frac{N}{\delta^q}  \left( \int_0^T \left(\int_{\bR^d} |f(t,x)|^{p}   dx \right)^{q/p} w(t)dt \right)^{1/q},
\end{align*}
where
$$
N = N\left(p,q,[w]_{A_q(\bR)},[w_0]_{A_p(\bR^d)} \right).
$$
However, it cannot be obtained by following the proof of Theorem \ref{enhance est thm} since 
$$
\int_{\bR^d}  |f(t, x-X^2_t(\omega'))|^p dx =  \int_{\bR^d} | f(t,  x )|^p dx \quad \forall \omega'~ \text{and}~ \forall t
$$
is used in the proof. 
\end{remark}

\mysection{Proof of the main theorem}
							\label{pf main thm}

\vspace{2mm}
{\bf Proof of Theorem \ref{main thm}}
\vspace{2mm}

Due to Theorem \ref{existence thm}, the existence and uniqueness of a solution $u$ is obvious. 
Moreover, \eqref{main particular est -1}, \eqref{main particular est 0}, and \eqref{main particular est} 
can be easily obtained from \eqref{main est -1}, \eqref{main est 0}, and \eqref{main est} 
since $|t|^{\beta} \in A_q(\bR)$ for any $-1 < \beta_1 < q-1$ (see \cite[Example 7.1.7]{grafakos2014classical}).
Thus it suffices to show \eqref{main est -1}, \eqref{main est 0} and \eqref{main est}.
Let $u$ be the solution to \eqref{main eqn}. 
First we show \eqref{main est -1} and \eqref{main est 0}.
For each $\varepsilon \in (0,1)$, we denote
$$
h_{1,\varepsilon}(t)= w(\alpha(t) + \varepsilon t ) \left(\delta(t) + \varepsilon \right)
$$
and
$$
h_{2,\varepsilon}(t)= w(\alpha(t) + \varepsilon t) |\delta(t) + \varepsilon|^{1-q}.
$$
Then by \eqref{strong u est 3} and \eqref{strong u est 2} with a simple change of variable,
\begin{align*}
& \sup_{t \in [0,T]} \left[ \|u(t,\cdot)\|^q_{L_p}e^{-q\int_0^t c(s)ds} \right]   \\
&\leq    \left[ \int_0^T \left|w(\alpha(t) + \varepsilon t) |\delta(t) + \varepsilon|^{1-q}\right|^{-\frac{1}{q-1}} dt \right]^{q-1}    \\
&\quad \times \int_0^T \|f(t,\cdot)\|^q_{L_p} e^{-q\int_0^t c(s)ds}   w(\alpha(t) + \varepsilon t) |\delta(t) + \varepsilon|^{1-q}(t)dt \\
&\leq    \left[ \int_0^{\alpha(T)+\varepsilon T} |w(t) |^{-\frac{1}{q-1}}dt \right]^{q-1}  \\
&\quad \times \int_0^T \|f(t,\cdot)\|^q_{L_p} e^{-q\int_0^t c(s)ds} w(\alpha(t) + \varepsilon t) |\delta(t) + \varepsilon|^{1-q}(t)dt 
\end{align*}
and
\begin{align*}
&\int_0^T\|u(t,\cdot)\|^q_{L_p}e^{-q\int_0^t c(s)ds} w(\alpha(t) + \varepsilon t ) \left(\delta(t) + \varepsilon \right) dt  \\
&\leq \left[\int_0^T w(\alpha(t) + \varepsilon t ) \left(\delta(t) + \varepsilon \right)  \left[ \int_0^t |w(\alpha(s) + \varepsilon s) |\delta(s + \varepsilon)|^{1-q}|^{-\frac{1}{q-1}} ds \right]^{q-1}  dt\right] \\
&\quad \times \int_0^T \|f(t,\cdot)\|^q_{L_p} e^{-q\int_0^t c(s)ds} w(\alpha(t) + \varepsilon t) |\delta(t + \varepsilon)|^{1-q}dt.
\end{align*}
Moreover, by taking $\varepsilon \to 0$, we have
\begin{align*}
& \sup_{t \in [0,T]} \left[ \|u(t,\cdot)\|^q_{L_p}e^{-q\int_0^t c(s)ds} \right]   \\
&\leq    \left[ \int_0^{\alpha(T)} |w(t) |^{-\frac{1}{q-1}}dt \right]^{q-1} \int_0^T \|f(t,\cdot)\|^q_{L_p} e^{-q\int_0^t c(s)ds} w(\alpha(t)) |\delta(t)|^{1-q}(t)dt 
\end{align*}
and
\begin{align}
							\notag
&\int_0^T\|u(t,\cdot)\|^q_{L_p}e^{-q\int_0^t c(s)ds} w(\alpha(t) ) \left(\delta(t) \right) dt  \\
							\notag
&\leq \left[\int_0^T w(\alpha(t)  ) \left(\delta(t) \right)  \left[ \int_0^t |w(\alpha(s) ) |\delta(s)|^{1-q}|^{-\frac{1}{q-1}} ds \right]^{q-1}  dt\right] \\
							\label{eqn 20220916 30}
&\quad \times \int_0^T \|f(t,\cdot)\|^q_{L_p} e^{-q\int_0^t c(s)ds} w(\alpha(t) ) |\delta(t)|^{1-q}dt.
\end{align}
One may think that this limit procedure does not seem to be clear. However, it is clear if our weight $w$ is continuous.
Moreover, if $w$ is bounded, then $w$ can be approximated by a sequence of continuous functions with a uniform upper bound.
Finally, considering $w \wedge M$ for any positive constant $M>0$, we can complete the limit procedure due to the monotone convergence theorem as $M \to \infty$.

We keep going to estimate the term in the middle of \eqref{eqn 20220916 30}.
Recalling the definition of $[w]_{A_p(\bR)}$ and applying the change of variable $\alpha(t):=\int_0^t \delta(s)ds \to t$, we have
\begin{align*}
&\int_0^T w(\alpha(t)  ) \delta(t) \left[ \int_0^t |w(\alpha(s) ) |\delta(s)|^{1-q}|^{-\frac{1}{q-1}} ds \right]^{q-1}  dt \\
&\leq \int_0^T w(\alpha(t)  )\delta(t)   dt   \left[ \int_0^T |w(\alpha(t) )|^{-\frac{1}{q-1}} \delta(t) ds \right]^{q-1} \\
&\leq \int_0^{\alpha(T)} w(t)    dt   \left[ \int_0^{\alpha(T)} |w(t )|^{-\frac{1}{q-1}} ds \right]^{q-1} \\
&\leq  [w]_{A_p(\bR)} \left[\alpha(T) \right]^q .
\end{align*}
By putting the above computations in \eqref{eqn 20220916 30}, we obtain \eqref{main est 0}.

\vspace{2mm}
Next we prove \eqref{main est}. We may assume that $f$ has a compact support in $[0,T] \times \bR^d$. 
We divide the proof into several steps.

\vspace{2mm}
{\bf (Step 1)} $\delta(t) \geq \varepsilon$ and $b^i(t)=c(t)=0$ for all $i$ and $t$.
\vspace{2mm}

We first assume that there exists a positive constant $\varepsilon \in (0,1)$ such that $\delta(t) \geq \varepsilon$ for all $t$. 
Additionally, suppose that $b^i(t)=0$ and $c(t)=0$ for all $t$ and $i$ in this first step.
Denote 
$$
\alpha(t) = \int_0^t \delta(s)ds.
$$
Then $\beta(t)$ becomes a strictly increasing function and it has the inverse $\beta(t):[0,\infty) \to [0,\infty)$ such that
\begin{align}
						\label{beta derivative}
\beta'(t) = \frac{1}{\alpha'(\beta(t))} = \frac{1}{ \delta( \beta(t))} \quad \forall t \in [0,\infty).
\end{align}
Define $v(t,x)=u(\beta(t),x)$. Then since $u$ is a solution to \eqref{main eqn},
\begin{align*}
v_t(t,x) 
= u_t(\beta(t),x) \beta'(t) 
= \frac{a^{ij}(\beta(t))}{\delta(\beta(t))} v_{x^ix^j}(t,x) + \frac{f(\beta(t),x)}{\delta(\beta(t))}
\end{align*}
and $v(0,x)=0$.
Note that 
$$
 \frac{a^{ij}(\beta(t))}{\delta(\beta(t))}   \xi^i \xi^j \geq |\xi|^2 \quad \forall \xi \in \bR^d.
$$
In other words,  $v$ becomes the solution to
\begin{align}
				\notag
&v_t(t,x)=\tilde  a^{ij}(t)v_{x^ix^j}(t,x) + \frac{f(\beta(t),x)}{\delta(\beta(t))}  \qquad (t,x) \in (0,T) \times \bR^d,   \\
&u(0,x)=0, 
				\label{v eqn}
\end{align}
with the coefficients $\tilde a^{ij}(t) = \frac{a^{ij}(\beta(t))}{\delta(\beta(t))}$ whose elliptic constant is  $1$.
Moreover, it is obvious that $\tilde a^{ij}(t)$ is locally integrable. Indeed,
by the change of the variable $\beta(t) \to t$ and \eqref{beta derivative},  
\begin{align*}
\int_0^T \tilde a^{ij}(t) dt = \int_0^{\beta(T)} a^{ij}(t)dt < \infty.
\end{align*}
Thus applying \eqref{enhance est}, we have
\begin{align}
							\notag
&\left( \int_0^{T_0} \left(\int_{\bR^d} |v_{xx}(t,x)|^{p}  dx \right)^{q/p} w(t)dt \right)^{1/q} \\
							\label{v trans est}
&\qquad \leq N  \left( \int_0^{T_0} \left(\int_{\bR^d} \left|\frac{f(\beta(t),x)}{\delta(\beta(t))}\right|^{p}  dx \right)^{q/p} w(t)dt \right)^{1/q},
\end{align}
where 
$$
N = N\left(p,q,[w_0]_{A_q(\bR)}, \kappa\right)
$$
and $T_0$ is a constant so that $\beta(T_0)= T$. 
By considering the change of variables $\beta(t) \to t$ in \eqref{v trans est}, we finally obtain
\begin{align}
								\notag
&\left( \int_0^{T} \left(\int_{\bR^d} |u_{xx}(t,x)|^{p}  dx \right)^{q/p} w(\alpha(t))   \delta(t) dt \right)^{1/q} \\
								\label{step1 est}
&\qquad \lesssim  \left( \int_0^{T} \left(\int_{\bR^d} |f(t,x)|^{p}  dx \right)^{q/p} w(\alpha(t)) (\delta(t))^{1-q}  dt \right)^{1/q}.
\end{align}

\vspace{2mm}
{\bf (Step 2)} $b^i(t)=c(t)=0$ for all $i$ and $t$.
\vspace{2mm}

In this step, we remove the condition $\delta(t) \geq \varepsilon$. 
For any $\varepsilon \in (0,1)$, we can rewrite \eqref{main eqn} as
\begin{align*}
&u_t(t,x)=(a^{ij}(t)+\varepsilon I_{d \times d})u_{x^ix^j}(t,x) +f(t,x)- \varepsilon \Delta u,   \\
&u(0,x)=0, 
 \qquad \qquad \qquad \qquad \qquad \qquad \qquad \qquad (t,x) \in (0,T) \times \bR^d,
\end{align*}
where $I_{d \times d}$ denotes the $d$ by $d$ identity matrix whose diagonal entries are 1 and the other entries are zero.
Thus applying \eqref{step1 est}, we have
\begin{align}
						\notag
&\left( \int_0^{T} \left(\int_{\bR^d} |u_{xx}(t,x)|^{p}  dx \right)^{q/p} w(\alpha_\varepsilon(t))   (\delta(t)+\varepsilon) dt \right)^{1/q} \\
						\notag
&\qquad \lesssim  \left( \int_0^{T} \left(\int_{\bR^d} |f(t,x)|^{p}  dx \right)^{q/p} w(\alpha_\varepsilon(t)) (\delta(t)+\varepsilon)^{1-q}  dt \right)^{1/q} \\
						\label{eqn 20220825 30}
&\qquad  \quad +\left( \int_0^{T} \left(\int_{\bR^d} |\varepsilon \Delta u (t,x)|^{p}  dx \right)^{q/p} w(\alpha_\varepsilon(t)) (\delta(t)+\varepsilon)^{1-q}  dt \right)^{1/q},
\end{align}
where $\alpha_\varepsilon(t) = \int_0^t (\delta(s) + \varepsilon)ds$. 
Observe that 
\begin{align*}
& \int_0^{T} \left(\int_{\bR^d} |\varepsilon \Delta u (t,x)|^{p} w(x+k(t)) dx \right)^{q/p} w_0(\alpha_\varepsilon(t)) (\delta(t)+\varepsilon)^{1-q}  dt \\
&= \int_0^{T} \left(\int_{\bR^d} | \Delta u (t,x)|^{p} w(x+k(t)) dx \right)^{q/p} w_0(\alpha_\varepsilon(t))(\delta(t)+\varepsilon) \left(\frac{\varepsilon}{\delta(t)+\varepsilon} \right)^q dt,
\end{align*}
\begin{align*}
(\delta(t)+\varepsilon) \left(\frac{\varepsilon}{\delta(t)+\varepsilon}\right)^q
\leq (\delta(t))^{1-q}
\end{align*}
and
\begin{align*}
(\delta(t)+\varepsilon) \left(\frac{\varepsilon}{\delta(t)+\varepsilon}\right)^q \to 0~\text{as}~ \varepsilon \to 0,
\end{align*}
where $0^{1-q} := \infty$.
Thus due to the dominate convergence theorem and the definition of the integral in \eqref{improper integ}, taking $\varepsilon \to 0$ in \eqref{eqn 20220825 30}, we have
\begin{align}
							\notag
& \int_0^{T} \left(\int_{\bR^d} |u_{xx}(t,x)|^{p}  dx \right)^{q/p} w_0(\alpha(t))   \delta(t) dt \\
							\label{step2 est}
&\lesssim  \int_0^{T} \left(\int_{\bR^d} |f(t,x)|^{p}  dx \right)^{q/p} w_0(\alpha(t)) (\delta(t))^{1-q}  dt. 
\end{align}

\vspace{2mm}
{\bf (Step 3)} (General case).
\vspace{2mm}

Let $v$ be a solution to the equation 
\begin{align*}
&v_t(t,x)=a^{ij}(t)v_{x^ix^j}(t,x) + e^{-\int_0^t c(s)ds} f\left(t,x-\int_0^t b(s)ds\right),   \\
&v(0,x)=0, 
 \qquad \qquad \qquad \qquad \qquad \qquad \qquad \qquad (t,x) \in (0,T) \times \bR^d.
\end{align*}
The as shown in the proof of Corollary \ref{cor 20220916 01}, the solution $u$ is given by 
$$
u(t,x)=e^{\int_0^t c(s)ds} v\left(t,x+\int_0^t b(s)ds\right)
$$
and obviously
$$
v(t,x)=e^{-\int_0^t c(s)ds} u\left(t,x-\int_0^t b(s)ds\right).
$$
Applying \eqref{step2 est} to $v$, we have
\begin{align*}
							\notag
& \int_0^{T} \left(\int_{\bR^d} \left|e^{-\int_0^t c(s)ds} u_{xx}\left(t,x-\int_0^t b(s)ds\right)(t,x)\right|^{p}  dx \right)^{q/p} \\
& \quad \times  w(\alpha(t))   \delta(t) dt \\
							\label{step2 est}
&\lesssim  \int_0^{T} \left(\int_{\bR^d} \left|f\left(t,x-\int_0^t b(s)ds\right)\right|^{p}  dx \right)^{q/p} e^{-q\int_0^t c(s)ds} w(\alpha(t)) (\delta(t))^{1-q}  dt. 
\end{align*}
Finally, the translation $x \to x+\int_0^t b(s)ds$ leads us to \eqref{main est}.

\qed

\mysection{Acknowledgement}

I would like to thank prof. Kyeong-Hun Kim for careful reading and suggesting valuable comments.

\vspace{2mm}
{\bf Data Availability}
\vspace{2mm}

Data sharing not applicable to this article as no datasets were generated or analysed during the current study.


\begin{thebibliography}{10}

\bibitem{amann2020linear}
H.~Amann.
\newblock Linear parabolic equations with strong boundary degeneration.
\newblock {\em Journal of Elliptic and Parabolic Equations}, 6(1):123--144,
  2020.

\bibitem{cao2018weighted}
D.~Cao, T.~Mengesha, and T.~Phan.
\newblock Weighted-$W^{1, p}$ estimates for weak solutions of degenerate and
  singular elliptic equations.
\newblock {\em Indiana University Mathematics Journal}, pages 2225--2277, 2018.

\bibitem{choi2022weighted}
J.-H. Choi and I.~Kim.
\newblock A weighted $L_p$-regularity theory for parabolic partial differential
  equations with time measurable pseudo-differential operators.
\newblock {\em arXiv preprint arXiv:2205.12463}, 2022.



\bibitem{cerrai2001}
S. Cerrai.
\newblock {\em  Second order PDE’s in finite and infinite dimension: a probabilistic approach}.
\newblock Berlin, Heidelberg: Springer Berlin Heidelberg, 2001.






\bibitem{dong2021approach}
H.~Dong and D.~Kim.
\newblock An approach for weighted mixed-norm estimates for parabolic equations
  with local and non-local time derivatives.
\newblock {\em Advances in Mathematics}, 377:107494, 2021.

\bibitem{dong2020parabolic}
H.~Dong and T.~Phan.
\newblock On parabolic and elliptic equations with singular or degenerate
  coefficients.
\newblock {\em arXiv preprint arXiv:2007.04385}, 2020.

\bibitem{dong2021parabolic}
H.~Dong and T.~Phan.
\newblock Parabolic and elliptic equations with singular or degenerate
  coefficients: The dirichlet problem.
\newblock {\em Transactions of the American Mathematical Society},
  374(09):6611--6647, 2021.

\bibitem{dong2021regularity}
H.~Dong and T.~Phan.
\newblock Regularity for parabolic equations with singular or degenerate
  coefficients.
\newblock {\em Calculus of Variations and Partial Differential Equations},
  60(1):1--39, 2021.

\bibitem{du2013wm}
K.~Du, S.~Tang, and Q.~Zhang.
\newblock $W^{m, p}$-solution ($p \geq 2$) of linear degenerate backward stochastic
  partial differential equations in the whole space.
\newblock {\em Journal of Differential Equations}, 254(7):2877--2904, 2013.

\bibitem{fichera1963unified}
G.~Fichera.
\newblock On a unified theory of boundary value problems for elliptic-parabolic
  equations of second order.
\newblock {\em Matematika}, 7(6):99--122, 1963.

\bibitem{fornaro2012degenerate}
S.~Fornaro, G.~Metafune, D.~Pallara, and R.~Schnaubelt.
\newblock Degenerate operators of tricomi type in lp-spaces and in spaces of
  continuous functions.
\newblock {\em Journal of Differential Equations}, 252(2):1182--1212, 2012.

\bibitem{fornaro2015second}
S.~Fornaro, G.~Metafune, D.~Pallara, and R.~Schnaubelt.
\newblock Second order elliptic operators in $L^2$ with first order
  degeneration at the boundary and outward pointing drift.
\newblock {\em Communications on Pure and Applied Analysis}, 14(2):407, 2015.

\bibitem{fornaro2022multi}
S.~Fornaro, G.~Metafune, D.~Pallara, and R.~Schnaubelt.
\newblock Multi-dimensional degenerate operators in $L^p$-spaces.
\newblock {\em Communications on Pure and Applied Analysis}, 21(6):2115, 2022.

\bibitem{gadjiev2020solvability}
T.~Gadjiev, M.~Kerimova, and G.~Gasanova.
\newblock Solvability of a boundary-value problem for degenerate equations.
\newblock {\em Ukrainian Mathematical Journal}, 72(4), 2020.

\bibitem{gadjiev2017priori}
T.~S. Gadjiev, G.~H. Gasanova, and G.~Zulfaliyeva.
\newblock A priori estimates for the solutions to a kind of degenerate
  elliptic-parabolic equations.
\newblock {\em Transactions of NAS of Azerbaijan, Issue Mathematics},
  37(1):92--108, 2017.

\bibitem{gerencser2016stochastic}
M.~Gerencs{\'e}r.
\newblock Stochastic PDEs with extremal properties.
\newblock 2016.

\bibitem{gerencser2015solvability}
M.~Gerencs{\'e}r, I.~Gy{\"o}ngy, and N.~Krylov.
\newblock On the solvability of degenerate stochastic partial differential
  equations in sobolev spaces.
\newblock {\em Stochastic Partial Differential Equations: Analysis and
  Computations}, 3(1):52--83, 2015.

\bibitem{grafakos2014classical}
L.~Grafakos.
\newblock {\em Classical Fourier Analysis, Third edition}.
\newblock Springer, 2014.

\bibitem{grafakos2014modern}
L.~Grafakos.
\newblock {\em Modern Fourier Analysis, Third edition}.
\newblock Springer, 2014.




\bibitem{wu2021lp}
I.~Gyöngy and S.~Wu.
\newblock On $L_p$-solvability of stochastic integro-differential equations. 
\newblock {\em Stoch. Partial Differ. Equ. Anal. Comput}, 9(2), 295-34, 2021.






\bibitem{hytonen2016analysis}
T.~Hyt{\"o}nen, J.~Van~Neerven, M.~Veraar, and L.~Weis.
\newblock {\em Analysis in Banach spaces}, volume~12.
\newblock Springer, 2016.

\bibitem{hytonen2018analysis}
T.~Hyt{\"o}nen, J.~Van~Neerven, M.~Veraar, and L.~Weis.
\newblock {\em Analysis in Banach Spaces: Volume II: Probabilistic Methods and
  Operator Theory}, volume~67.
\newblock Springer, 2018.

\bibitem{keldysh1951some}
M.~Keldysh.
\newblock ``on some cases of degeneracy of elliptic-type equations on the
  boundary of a domain", dokl. akad. nauk sssr 77.
\newblock 1951.

\bibitem{kim2018second}
I.~Kim and K.-H. Kim.
\newblock On the second order derivative estimates for degenerate parabolic
  equations.
\newblock {\em Journal of Differential Equations}, 265(11):5959--5983, 2018.

\bibitem{kim2019sharp}
I.~Kim and K.-H. Kim.
\newblock A sharp $L_p$-regularity result for second-order stochastic partial
  differential equations with unbounded and fully degenerate leading
  coefficients.
\newblock {\em arXiv preprint arXiv:1905.07545}, 2019.

\bibitem{kim2007sobolev}
K.-H. Kim.
\newblock Sobolev space theory of parabolic equations degenerating on the
  boundary of c 1 domains.
\newblock {\em Communications in Partial Differential Equations},
  32(8):1261--1280, 2007.

\bibitem{kim2017heat}
K.-H. Kim and K.~Lee.
\newblock On the heat diffusion starting with degeneracy.
\newblock {\em Journal of Differential Equations}, 262(3):2722--2744, 2017.

\bibitem{kozhanovinverse}
A.~Kozhanov, U.~Abulkayirov, and G.~Ashurova.
\newblock Inverse problems of determining coefficients of time type in a
  degenerate parabolic equation.




\bibitem{krylov1995introduction}
N.~V. Krylov.
\newblock {\em Introduction to the theory of diffusion processes}.
\newblock American Mathematical Soc., 1995.



\bibitem{krylov2002introduction}
N.~V. Krylov.
\newblock {\em Introduction to the theory of random processes}.
\newblock American Mathematical Soc., 2002.

\bibitem{Krylov2008}
N.~V. Krylov.
\newblock {\em Lectures on Elliptic and Parabolic Equations in Sobolev Spaces},
  volume~96.
\newblock American Mathematical Society Providence, RI, 2008.

\bibitem{ladyvzenskaja1988linear}
O.~A. Lady{\v{z}}enskaja, V.~A. Solonnikov, and N.~N. Ural'ceva.
\newblock {\em Linear and quasi-linear equations of parabolic type}, volume~23.
\newblock American Mathematical Soc., 1988.

\bibitem{leahy2015degenerate}
J.-M. Leahy and R.~Mikulevi{\v{c}}ius.
\newblock On degenerate linear stochastic evolution equations driven by jump
  processes.
\newblock {\em Stochastic Processes and their Applications},
  125(10):3748--3784, 2015.

\bibitem{li2017weighted}
P.~Li, P.~R. Stinga, and J.~L. Torrea.
\newblock On weighted mixed-norm sobolev estimates for some basic parabolic
  equations.
\newblock {\em Communications on Pure \& Applied Analysis}, 16(3):855, 2017.

\bibitem{mamedov2013first}
I.~Mamedov.
\newblock First boundary-value problem for second-order elliptic-parabolic
  equations with discontinuous coefficients.
\newblock {\em Journal of Mathematical Sciences}, 190(1):104--134, 2013.

\bibitem{monticelli2019poincare}
D.~D. Monticelli, K.~R. Payne, and F.~Punzo.
\newblock Poincar{\'e} inequalities for sobolev spaces with matrix-valued
  weights and applications to degenerate partial differential equations.
\newblock {\em Proceedings of the Royal Society of Edinburgh Section A:
  Mathematics}, 149(1):61--100, 2019.

\bibitem{oleinik1965smoothness}
O.~A. Oleinik.
\newblock On the smoothness of solutions of degenerating elliptic and parabolic
  equations.
\newblock In {\em Doklady Akademii Nauk}, volume 163, pages 577--580. Russian
  Academy of Sciences, 1965.

\bibitem{oleinik1966alcuni}
O.~A. Oleinik.
\newblock Alcuni risultati sulle equazioni lineari e quaasi lineari
  ellittico-paraboliche a derivate parziali del secondo ordine.
\newblock {\em ATTI DELLA ACCADEMIA NAZIONALE DEI LINCEI RENDICONTI-CLASSE DI
  SCIENZE FISICHE-MATEMATICHE \& NATURALI}, 40(5):775, 1966.

\bibitem{oleinik2012second}
O.~A. Oleinik.
\newblock {\em Second-order equations with nonnegative characteristic form}.
\newblock Springer Science \& Business Media, 2012.

\bibitem{pruess2017second}
J.~Pruess.
\newblock On second-order elliptic operators with complete first-order boundary
  degeneration and strong outward drift.
\newblock {\em Archiv der Mathematik}, 108(3):301--311, 2017.

\bibitem{schochet2022sobolev}
S.~Schochet.
\newblock Sobolev estimates for non-uniformly parabolic pdes.
\newblock {\em Partial Differential Equations and Applications}, 3(1):1--25,
  2022.

\bibitem{Stein1993}
E.~M. Stein and T.~S. Murphy.
\newblock {\em Harmonic analysis: real-variable methods, orthogonality, and
  oscillatory integrals}, volume~3.
\newblock Princeton University Press, 1993.


\bibitem{zulfaliyeva2022smoothness}
G.~Zulfaliyeva.
\newblock The smoothness of solutions of degenerate nonlinear elliptic
  equations.
\newblock {\em International Journal of Applied Mathematics}, 35(1):49, 2022.

\end{thebibliography}
\end{document}